\title{Safeguarded Learned Convex Optimization}
\author{
    Howard Heaton,\equalcontrib\textsuperscript{\rm 1}\ 
    Xiaohan Chen,\equalcontrib\textsuperscript{\rm 2}\ 
    Zhangyang Wang,\textsuperscript{\rm 2}
    Wotao Yin\textsuperscript{\rm 3} 
}
\newcommand{\eg}{\textit{e.g.}\ }
\newcommand{\ie}{\textit{i.e.}\ }
\newcommand{\bbR}{\mathbb{R}}
\newcommand{\bbN}{\mathbb{N}}
\newcommand{\II}{\mathrm{I}}
\newcommand{\LTO}{\mathcal{L}2\mathcal{O}}
\newcommand{\limk}{\lim_{k\rightarrow\infty}}
\DeclareMathOperator*{\argmin}{arg\,min}
\newcommand{\fix}[1]{\mathrm{fix}(#1)}
\newcommand{\prox}{\mathrm{prox}}
\newtheorem{theorem}{Theorem}
\newtheorem{assumption}{Assumption}
\newtheorem{corollary}{Corollary}
\newtheorem{lemma}{Lemma}
\newtheorem{remark}{Remark}
\renewcommand*{\ALG@name}{Algorithm}
\begin{document}

\maketitle

\begin{abstract}
Applications abound in which optimization problems must be repeatedly solved, each time with new (but similar) data. 
Analytic optimization algorithms can be hand-designed to provably solve these problems in an iterative fashion. 
{On one hand, data-driven algorithms can ``learn to optimize'' (L2O)  with much fewer iterations and   similar cost per iteration as general-purpose optimization algorithms. On the other hand, unfortunately, many L2O algorithms lack converge guarantees.}
To fuse the advantages of these approaches, we present a Safe-L2O framework. Safe-L2O updates incorporate a safeguard to guarantee convergence for convex problems with proximal and/or gradient oracles. The safeguard is simple and computationally cheap to implement, and it is activated only when the data-driven L2O updates would perform poorly or appear to diverge. This yields the numerical benefits of employing machine learning   to create rapid L2O algorithms while still guaranteeing convergence. Our numerical examples show    convergence of Safe-L2O algorithms, even when the provided data is \textit{not} from the distribution of training data.
\end{abstract}

Solving scientific computing problems often requires  application of efficient and scalable optimization algorithms. 
Data-driven algorithms can execute in much fewer iterations  and with   similar cost per iteration as state-of-the-art general purpose algorithms. 
Inspired by one such algorithm called ISTA, \cite{GregorLeCun2010_learning} proposed treating the entries in fixed matrices/vectors of the algorithm as learnable parameters that can vary by iteration. These entries were   fine-tuned to obtain optimal performance on a data set for a fixed number of iterations.  
Empirically, this approach converged and showed roughly a 20-fold reduction in computational cost compared to the original analytic algorithm.
Several related works followed, also demonstrating numerical success (discussed below). 
These efforts open the door to a new class of algorithms and analyses. Analytic optimization results often provide worst-case convergence rates, and limited theory exists pertaining to instances drawn from a  common distribution (\eg data supported on a low-dimensional manifold). Most L2O methods have little or no convergence guarantees, especially on data \textit{distinct} from what is seen in training. This work   addresses the inquiry:
\begin{center}  
		\it 
	    Can a safeguard be added to    L2O algorithms to give  convergence   without significantly hindering performance? 
\end{center}
A safeguard is anything that identifies when a ``bad'' L2O update would occur and what to do in place of that ``bad'' update.
{Informally, what a safeguard does can be summarized by generating a sequence $\{x^k\}$ with updates of the form}
\begin{equation}
    x^{k+1} = 
    \begin{cases}
        \begin{array}{cl}
        \mbox{L2O Update} & \mbox{if  update is ``good''} \\
        \mbox{Fallback Update} & \mbox{otherwise.}
        \end{array}
    \end{cases}
\end{equation}%
We provide an affirmative answer to the question for convex problems with gradient and/or proximal oracles by providing such a safeguard and replacing ``bad'' L2O updates with updates from analytic methods.  
Our framework is called Safe-L2O.
Since a trade-off is formed between per iteration costs and  ensuring convergence, we clarify properties constituting a ``practical'' L2O safeguard as follows.
\begin{enumerate}
    \item The safeguard should ensure certain forms of \textit{worst-case} convergence similar to analytic algorithms.

    \item  The safeguard must only use known quantities related to   convex problems
    (\eg objective values, gradient norms).
    
    \item  Both L2O and Safe-L2O schemes should perform identically on ``good'' data with comparable per-iteration costs.
    
    \item The safeguard should kick in only when ``bad'' L2O updates would otherwise occur.
\end{enumerate} 

The core challenge is to create a simple safeguard that kicks in only when needed. Unlike classic optimization algorithms, exceptional L2O algorithms do {\it not} necessarily exhibit the behavior that each successive iterate is ``better'' than the current iterate (\ie are not monotonically improving). Loosely speaking, this means there are cases where an L2O scheme that gets ``worse'' for a couple iterates yields a better final output than an L2O scheme that is required to get ``better'' at each iterate. The intuition behind why this can be acceptable is   we are   interested in the final output of the L2O algorithm and L2O schemes may learn ``shortcuts.'' 
From this insight, we deduce the safeguard should exhibit a form of trailing behavior, \ie it should measure progress of previous iterates and only require that updates are ``good'' on average. If the safeguard {triggers too often}, then the Safe-L2O scheme's flexibility and performance are limited. If it {triggers too rarely}, then the Safe-L2O scheme may exhibit highly oscillatory behavior and converge   slowly.


In addition to L2O updates, our  method uses a safeguard condition with the update formula from a conventional algorithm. 
When the ``good'' condition holds, the L2O update is used; when it fails, the formula from the conventional algorithm is used. In the ideal case, L2O updates are applied often and the conventional algorithm formula provides a ``fallback'' for exceptional cases. This fallback is designed together with the safeguard condition to ensure convergence. This also implies, even when an L2O algorithm has a fixed number of iterations with tunable parameters, the algorithm may be extended to an arbitrary number of iterations by applying the fallback to compute latter updates (see Figure \ref{fig: ALISTA-plots}).


{\bf Review of L2O Methods.}   
A seminal L2O work in the context of sparse coding was by \cite{GregorLeCun2010_learning}.
Numerous follow-up papers also demonstrated empirical success at constructing rapid regressors approximating iterative sparse solvers  for compression, nonnegative matrix factorization, compressive sensing and other applications \cite{SprechmannBronsteinSapiro2015_learning,WangLingHuang2016_learning,WangLiuChangLingYangHuang2016_d3,HersheyRouxWeninger2014_deep,YangSunLiXu2016_deep}. A summary of unfolded optimization procedures for sparse recovery is given by \cite{AblinMoreauMassiasGramfort2019_learning} in Table A.1. The majority of L2O works pertain to sparse coding and provide limited theoretical results. 
Some works have interpreted LISTA in various ways to provide proofs of different convergence properties \cite{GiryesEldarBronsteinSapiro2018_tradeoffs,MoreauBruna2017_understanding}. Others have investigated structures related to LISTA \cite{XinWangGaoWipfWang2016_maximal,BlumensathDavies2009_iterative,BorgerdingSchniterRangan2017_ampinspired,MetzlerMousaviBaraniuk2017_learned}, providing  results varying by assumptions.
 \cite{ChenLiuWangYin2018_theoretical} introduced necessary
conditions for the LISTA weight structure to asymptotically achieve a linear convergence rate. This was followed by \cite{LiuChenWangYin2019_alista}, which further simplified the weight conditions and provided a result stating that, with high probability, the convergence rate of LISTA is at most linear.
The mentioned results are useful, yet can require intricate assumptions and proofs specific to the sparse coding problems.
We refer readers to the recent survey \cite{chen2022learning} for a more comprehensive overview of L2O methods.


Our safeguarding scheme is related to existing works in Krasnosel'ski\u{\i}-Mann (KM) methods. 
The SuperMann scheme \cite{ThemelisPatrinos2019_supermann} presents a KM method that safeguards in a more hierarchical manner than ours and solely refers to the current iterate residuals (plus a summable sequence). Additionally, a similar safeguarding setup has been used for Anderson accelerated KM methods  \cite{ZhangODonoghueBoyd2018_Globally}. These methods are not designed with L2O in mind and differ from our approach both in the assumptions used and particular update formulae.

{\bf Our Contribution.} 
We provide a simple framework, Safe-L2O, for wrapping data-driven algorithms with convergence guarantees.  This framework can be used with all L2O algorithms that solve convex problems for which proximal and/or gradient oracles are available. We incorporate multiple safeguarding procedures in a general setting and present a simple procedure for utilizing machine learning methods to instill knowledge from available data.  These results   form a single, general framework for use by practitioners. 

\section{Fixed Point Methods}

Fixed-point iteration abstracts most of the convex optimization methods that are based on gradient and/or proximal oracles, and those methods are targets of recent L2O accelerations. Our method is based on examining the fixed-point residual, and so it applies to a wide variety of L2O methods, even   complicated first-order methods (\eg ADMM and primal-dual methods) where the underlying problems can include constraints.
To provide a brief background, here we overview fixed point methods.
Denote the set of fixed points of each operator $T\colon \bbR^n\rightarrow\bbR^n$ by $\fix{T} \triangleq \{x: Tx = x\}.$
For an operator $T$ with a nonempty fixed point set (\ie $\fix{T}\neq\emptyset$), we consider the fixed point problem 
\begin{equation}
\mbox{Find $x^\star$ such that $x^\star \in \fix{T}$.}
\label{eq: fixed-point-problem}
\end{equation}
See Table \ref{table: NE-operators} for examples of the operator $T$ in convex minimization.
We focus on  fixed point iteration to give a general approach for creating  sequences that converge to solutions of (\ref{eq: fixed-point-problem}) and, thus, of the corresponding optimization problem.  
 
The following definitions are used throughout. 
An operator $T\colon \bbR^n\rightarrow\bbR^n$ is \textit{ nonexpansive} if it is 1-Lipschitz,\footnote{The Euclidean norm on $\bbR^n$ is denoted by $\|\cdot\|$.} \ie 
\begin{equation}
    \|T(x)-T(y)\| \leq \|x-y\|, \ \ \ \mbox{for all $x,y\in\bbR^n$.}
\end{equation}
An operator $T$ is {\it  averaged} if there exists $\vartheta \in (0,1)$ and  a nonexpansive operator $Q :\bbR^n\rightarrow \bbR^n$ such that $T = (1-\vartheta) \II + \vartheta Q$, with $\II$ the identity.
%
\noindent A classic theorem states   sequences generated by successively applying an averaged operator converge to a fixed point. 
In all of this work, we assume each operator $T$ is averaged.
This method comes from \cite{Krasnoselskii1955_two} and \cite{Mann1953_mean}, which yielded adoption of the name {\it Krasnosel'ski\u{\i}-Mann} (KM) method. This result is stated below.

\begin{theorem}    \label{thm: KM-convergence}
	If an averaged operator $T:\bbR^n\rightarrow\bbR^n$ has a nonempty fixed point set and a sequence $\{x^k\}$ with arbitrary initial iterate $x^1\in\bbR^n$ satisfies the update relation
	\begin{equation}
	x^{k+1} = T(x^k), \ \ \ \mbox{for all $k\in\bbN$,}
	\label{eq: classic-KM-iteration}
	\end{equation}
	then there is a solution $x^\star \in \fix{T}$ to (\ref{eq: fixed-point-problem}) such that the sequence $\{x^k\}$ converges to $x^\star$.
\end{theorem}

\begin{table*}[t]
    \caption{Averaged operators for well-known algorithms. We assume $\alpha > 0$ and, when $\alpha$ is multiplied by a gradient, we also assume $\alpha < 2/L$, with $L$ the Lipschitz constant for the gradient. The dual of a function is denoted by a superscript $*$, and $\Omega = \{ (x,z) : Ax+Bz=b\}$. Operators $J$ and $R$ are defined in equations (\ref{eq: resolvent}) and (\ref{eq: reflected-resolvent}), respectively. The block matrix $M$ is $M=[\alpha^{-1}\mbox{Id}, A^T; -A,\beta^{-1}\mbox{Id}]$. In each case, $\mathcal{L}$ is the Lagrangian associated with the presented problem.}	 
	\label{table: NE-operators}	
	    \centering
	    {\small 
		\begin{tabular}{c|c|c} 
			Problem & Method & Fallback Operator $T$  
			\\[4.5 pt] \hline\hline &  & \\[-6 pt]
			$\min f(x)$ & Gradient Descent & $\mbox{Id}-\alpha \nabla f$  
	    	\\[4.5 pt] \hline &  & \\[-6 pt]
			$\min f(x)$ & Proximal Point & $\prox{\alpha f}$  
			\\[4.5 pt] \hline &  & \\[-6 pt]
			$\min\{ g(x) : x \in C\}$ & Projected Gradient & $\mbox{proj}_C\circ \left( \mbox{Id} - \alpha \nabla g \right)$ 
			\\[4.5 pt] \hline &  & \\[-6 pt]			
			$\min f(x) + g(x)$ & Proximal Gradient & $\prox{\alpha f}\circ\left( \mbox{Id} - \alpha \nabla g \right)$ 
		    \\[4.5 pt] \hline &  & \\[-6 pt]
			$\min f(x) + g(x)$ & Douglas-Rachford & $\frac{1}{2}\left(\mbox{Id}  + R_{\alpha\partial f}\circ R_{\alpha \partial g}\right)$  
			\\[4.5 pt] \hline &  & \\[-6 pt]		
			$\displaystyle \min_{(x,z)\in\Omega}  f(x) + g(z)$ 
			& {ADMM}& $\frac{1}{2}\left(  \mbox{Id} + R_{\alpha A \partial f^* (A^T\cdot )} \circ R_{\alpha (B \partial g^*(B^T\cdot)-b)}  \right)$  
			\\[4.5 pt] \hline &  & \\[-6 pt]
			$\mbox{min} f(x) \ \mbox{s.t.} \ Ax=b$
			& Uzawa
			& $\mbox{Id} + \alpha \left(A\nabla f^*(-A^T\cdot )-b\right)$ 
			\\[4.5 pt] \hline &  & \\[-6 pt]
             {$\mbox{min} f(x) \ \mbox{s.t.} \ Ax=b$}
			& Proximal Method of Multipliers
			&  {$J_{\alpha \partial \mathcal{L}}$ } 
			\\[4.5 pt] \hline &  & \\[-6 pt]			
			$\mbox{min} f(x) + g(Ax)$
			& Primal-Dual Hybrid Gradient
			& $J_{M^{-1}\partial \mathcal{L}}$
	\end{tabular}}
\end{table*}

For reference, we briefly discuss the use of resolvents in conventional algorithms.
Consider a convex function $f\colon\bbR^n\rightarrow\bbR$ with subgradient $\partial f$.
For $\alpha > 0$, the {\it resolvent} $J_{\alpha\partial f}$ of  $\alpha \partial f$ is  defined by $J_{\alpha \partial f}(x) 
    \triangleq (\mbox{I} + \alpha \partial f)^{-1}(x)$, \ie 
    \begin{align}
    J_{\alpha \partial f}(x)  
    = \left\lbrace y :  (x-y)/\alpha\in \partial f(y) \right\rbrace
	\label{eq: resolvent}
    \end{align}    
and the {\it reflected resolvent} of $\partial f$ is
\begin{equation}
    {R_{\alpha \partial f}}(x) \triangleq   (2J_{\alpha \partial f} - \mbox{Id})(x)
    = 2J_{\alpha \partial f}(x) - x.
    \label{eq: reflected-resolvent}
\end{equation}
If $f$ is closed, convex, and proper, then the resolvent is precisely the proximal operator, \ie 
\begin{equation}
J_{\alpha \partial f}(x)
=\mbox{prox}_{\alpha f}(x)
 \triangleq \argmin_{z\in\bbR^n} \alpha f(z) + \dfrac{1}{2}\|z-x\|^2.
\end{equation} 
Proximal operators for several well-known functions can be expressed by explicit formulas (\eg see page 177 in \cite{Beck2017_First}).
It can be shown that $R_{\alpha \partial f}$ is nonexpansive and $J_{\alpha \partial f}$ is averaged   \cite{BauschkeCombettes2017_convex}. 
Table \ref{table: NE-operators} provides several examples of these operators  in well-known optimization algorithms. 

\section{Safeguarded L2O Method}    
\label{sec: Safe-L2O}

\begin{algorithm}
\caption{ L2O Network\  (No Safeguard)}
\label{alg: L2O_Abstract}
\begin{algorithmic}[1]           
    \State $\LTO(d;\ \Theta):$ 
    
    \State{\begin{tabular}{p{0.2\textwidth}r}
     \hspace*{0pt} $x^1 \leftarrow \tilde{x}$
     & 
     {\footnotesize $\vartriangleleft$ Initialize inference}
     \end{tabular}}        

    \State{\begin{tabular}{p{0.2\textwidth}r}
     \hspace*{0pt} {\bf for } $k=1,2,\ldots,K$
     & 
     $\vartriangleleft$  Loop for each layer
     \end{tabular}}  
     
    \State{\begin{tabular}{p{0.2\textwidth}r}
     \hspace*{8pt} $x^{k+1} \leftarrow T_{\Theta^k}(x^k;d)$
     & 
     {\footnotesize $\vartriangleleft$ L2O Update }
     \end{tabular}}      


    \State{\begin{tabular}{p{0.2\textwidth}r}
     \hspace*{0pt} {\bf return} $x^{K+1}$
     & 
     {\footnotesize $\vartriangleleft$ Output inference}
     \end{tabular}}   
\end{algorithmic}
\end{algorithm}

This section presents the  Safe-L2O framework.
The safeguard acts as a wrapper around a data-driven algorithm, which is formulated in practice as a neural network.
Each L2O operator, denoted throughout by $T_{\Theta^k}$, is parameterized by layerwise weights $\Theta = (\Theta^1, ..., \Theta^K)$. Input data $d$ is used to define an optimization problem   (\eg the measurement vector in a least squares problem). To make this dependence clear, we   often write $T(\cdot;\ d)$. Often $T_{\Theta^k}(\cdot ;d)$ can be viewed as forming one or multiple layers of a feed forward network. Thus,  we interpret $\mathcal{N}_\Theta(d)$ in Algorithm \ref{alg: L2O_Abstract}      as a feed forward network. In addition to an L2O operator $T_{\Theta^k}$, our Safe-L2O method uses a fallback operator $T$ {(unrelated to $T_{\Theta^k}$)} and a scalar sequence $\{\mu_k\}$. Here $T$ defines an averaged operator from the update formula of a conventional optimization algorithm. Each $\mu_k$ defines a reference value to determine whether a tentative L2O update is ``good.'' 
Each reference value $\mu_k$ in our   safeguarding schemes is related to a combination of $\|y^i - T(y^i;d)\|$ and $\|y^i - x^i\|$ among previous iterates $i=1,\dots,k$, where $y^i = T_{\Theta^i}(x^i;d)$.


\begin{algorithm}
\caption{ Safeguarded L2O (Safe-L2O)}
\label{alg: Safe-L2O}
\begin{algorithmic}[1]           
    \State $\text{Safe-L2O}(\LTO(d;\Theta), T, \alpha,\ \beta )$
    
    \State{\begin{tabular}{p{0.47\textwidth}r}
     \hspace*{0pt}  
     $x^0 \leftarrow \tilde{x}$, \quad 
     $x^1 \leftarrow \tilde{x}$,\quad 
     $y^1 \leftarrow T_{\Theta^1}(x^1)$,\quad 
     $k\leftarrow 1$
     & 
     \end{tabular}}        

    \State{\begin{tabular}{p{0.47\textwidth}r}
     \hspace*{0pt}  $\mu_1 \leftarrow \alpha^{-1}\cdot (\|y^1 - T(y^1;d)\| + \beta \|y^1 - x^1\|)$
     & 
     \end{tabular}}          

    \State{\begin{tabular}{p{0.47\textwidth}r}
     \hspace*{0pt} {\bf while} $\|x^{k}-x^{k-1}\| > \varepsilon$ {\bf or} $k =1$
     & 
     \end{tabular}}  
     
    \State{\begin{tabular}{p{0.47\textwidth}r}
     \hspace*{8pt} $y^k \leftarrow T_{\Theta^k}(x^k;d)$
     & 
     \end{tabular}}      
     
    \State{\begin{tabular}{p{0.47\textwidth}r}
     \hspace*{8pt} {\bf if} $\| y^k - T(y^k;d) \| + \beta \|y^k-x^k\|\leq \alpha \mu_k$
     & 
     \end{tabular}}      
     
    \State{\begin{tabular}{p{0.47\textwidth}r}
     \hspace*{16pt} $x^{k+1} \leftarrow y^k$
     & 
     \end{tabular}}

    \State{\begin{tabular}{p{0.47\textwidth}r}
     \hspace*{8pt} {\bf else}
     & 
     \end{tabular}}      
     
    \State{\begin{tabular}{p{0.47\textwidth}r}
     \hspace*{16pt} $x^{k+1} \leftarrow  T(x^k;d)$
     & 
     \end{tabular}}

    \State{\begin{tabular}{p{0.47\textwidth}r}
     \hspace*{8pt} Update safeguard $\mu_{k+1}$
     & 
     \end{tabular}}     

    \State{\begin{tabular}{p{0.47\textwidth}r}
     \hspace*{8pt} $k\leftarrow k+1$
     & 
     \end{tabular}}       

    \State{\begin{tabular}{p{0.47\textwidth}r}
     \hspace*{0pt} {\bf return} $x^k$
     & 
     \end{tabular}}   
\end{algorithmic}
\end{algorithm}  

 We propose the Safe-L2O scheme in Algorithm~\ref{alg: Safe-L2O}.
{As   shown in Line 1, a safeguarded L2O operator consists of an L2O network $\LTO(d;\Theta)$, a fallback operator $T$, and a parameter  $\alpha\in (0,1)$.
Here $\Theta = (\Theta^1, \ldots, \Theta^K)$ forms layerwise weights $\Theta^k$ that define the L2O update $T_{\Theta^k}$ at the $k$-th iteration.
These parameters are trained beforehand, following standard training methods for machine learning models (as outlined in the Appendices).
Table~\ref{table: NE-operators} shows numerous choices of the fallback operator $T$ for different combinations of optimization problems and algorithms
In Line 2, the initial iterate $x^1$ is chosen to be an arbitrary (but fixed) vector $\tilde{x}$.
The initial iterate $\mu_1$ of the safeguard sequence $\{\mu_k\}$ is   initialized using the initial iterate $x^1$, an L2O update $y^1$, and the fallback operator $T$ in Line 3.
%
From Line 4 to Line 11, a repeated loop occurs to compute each update $x^{k+1}$. In Line 5 the L2O operator is applied to the current iterate $x^k$ get a tentative update $y^k$. 
This $y^k$ is  ``good'' if the the inequality in Line 6 holds. In such a case, the L2O update is assigned to $x^{k+1}$ in Line 7. Otherwise, the fallback operator $T$ is triggered to obtain the update $x^{k+1}$ in Line 9.
Note the initial iterate $\mu_1$ is defined such that $x^2 = y^1$, \ie the first L2O update is always accepted.
Lastly, the safeguard parameter is updated in Line 10. Refer to Table~\ref{table: update-mu} for choices of schemes to update the safeguard parameters.} 

\begin{figure*}[t]
    \centering
    \subfloat[Performance on {\it seen} distribution]{\includegraphics[height=2.0in]{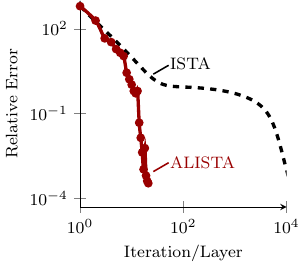}}
    \subfloat[Performance on {\it unseen} distribution]{\includegraphics[height=2.2in]{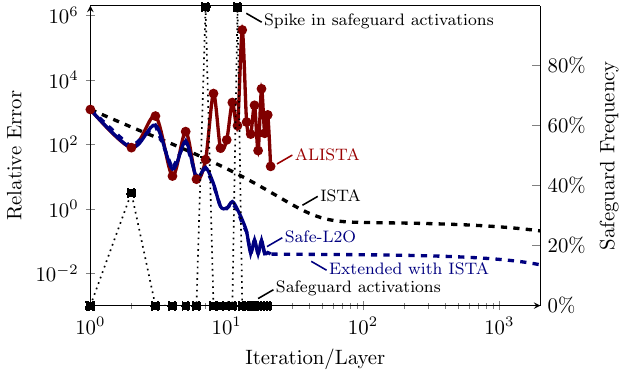}}
    \caption{Plot of error versus iteration for ALISTA example.
     {Here ISTA is the classic algorithm, ALISTA is the L2O operator in Algorithm~\ref{alg: L2O_Abstract} and Safe-L2O is the safeguarded version of ALISTA in Algorithm~\ref{alg: Safe-L2O}.}
	Trained with $\phi_d = f_d$. Inferences used $\alpha = 0.99$ and $\mbox{EMA}(0.25)$. 
    In (b), how often the L2O update is ``bad'' and the  safeguard activates for Safe-L2O is indicated in reference to the right vertical axis. This plot shows the safeguard is used only when $k=2$, $k=7$, and $k=12$.}
    \label{fig: ALISTA-plots}
\end{figure*}

   Below are standard   assumptions  used to prove the main result. The first enables a fixed point formulation.
   
\begin{assumption}\label{ass: T}
	For input data $d$, the   optimization problem has a solution and there is an operator $T$ such that 
	i) $\fix{T(\cdot;d)}$ is the solution set  and
	ii) $T(\cdot ;d)$ is averaged.
\end{assumption}

\noindent The next assumption ensures   used L2O updates approach  solutions. This is done by computing the safeguard value, which is a fixed point residual with the fallback operator.
\begin{assumption} \label{ass: mu-k}
    Here $\alpha \in [0,1)$, $\beta \in (0,\infty)$ and the safeguard   $\{\mu_k\}$ is monotonically decreasing such that
    \begin{equation}
        \| T(x^k;d) - x^k \| \leq \mu_k, \ \ \ \mbox{for all $k\in\bbN$},
        \label{eq: ass-upper-bd-safeguard}
    \end{equation}
    and there exists $\zeta \in (0,1)$ such that
    \begin{equation}
        \mu_{k+1} \leq \zeta \mu_k, \ \ \ \mbox{whenever $x^{k+1}$ is an L2O update.}
        \label{eq: ass-geometric-safeguard}
    \end{equation}
\end{assumption}

Our proposed methods  for choosing the sequence $\{\mu_k\}$ satisfy Assumption \ref{ass: mu-k} (see Table \ref{table: safeguarding-choices}).
These methods are {\it adaptive} in the sense that each     $\mu_k$ depends upon the weights $\Theta^k$,  iterate $x^k$ and (possibly) previous weights and iterates.
Each safeguard parameter $\mu_k$ also remains constant in $k$ except for when the sum of residual norms $\|x^{k+1}-T(x^{k+1})\|$ and $\|y^k-x^k\|$ decreases to less than a geometric factor of $\mu_k$.
This allows each $\mu_k$ to trail the value of the residual norm $\|x^k-T(x^k)\|$ and   the residual norm to increase in $k$ from time to time.  This trailing behavior provides flexibility to the L2O updates. 
Our main result is below and   is followed by a corollary justifying use of the schemes in Table  \ref{table: safeguarding-choices} (both proven in  the appendix). 

\def\thmConv{
	If $\{x^k\}$ is a sequence generated by the inner loop in Safe-L2O and Assumptions \ref{ass: T} and \ref{ass: mu-k} hold,	then $\{x^k\}$ converges to a limit   $x_d^\star \in \fix{T(\cdot;d)}$, \ie $x^k \rightarrow x_d^\star$. 
}
\begin{theorem} \label{thm: Safe-L2O-convergence}
    \thmConv
\end{theorem}

\begin{figure*}
    \centering
    \subfloat[Performance on {\it seen} distribution]{\includegraphics[height=2.in]{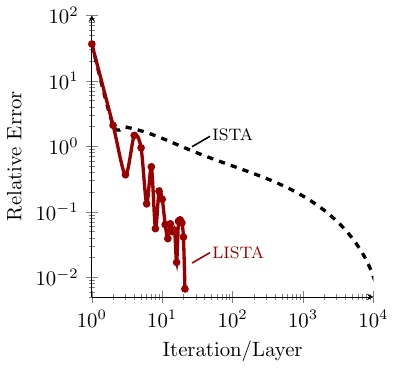}}
    \subfloat[Performance on {\it unseen} distribution]{\includegraphics[height=2.15in]{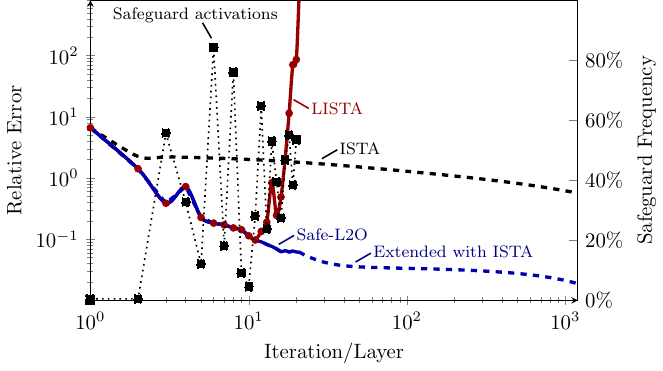}}
    \caption{Plot of error versus iteration for LISTA denoising.
     {Here ISTA is the classic algorithm, LISTA is the L2O operator in Algorithm~\ref{alg: L2O_Abstract} and Safe-L2O is the safeguarded version of LISTA in Algorithm~\ref{alg: Safe-L2O}.}
	Trained with $\phi_d = f_d$. Inferences used $\alpha = 0.99$ and $\mbox{EMA}(0.25)$.  In (b), how often the L2O update is ``bad'' and the  safeguard activates for Safe-L2O is indicated in reference to the right vertical axis. This plot shows the safeguard is used intermittently for $k > 2$.} 
    \label{fig: LISTA-plots}
\end{figure*}

\def\corConv{
If   $\{x^k\}$ is   generated by the inner loop in Safe-L2O and Assumption \ref{ass: T} holds, and  $\{\mu_k\}$ is generated using a scheme outlined in Table \ref{table: safeguarding-choices} with $\alpha \in [0,1)$ and $\beta\in(0,\infty)$, then  $x^k \rightarrow x_d^\star \in \fix{T(\cdot;d)}$.
}

\begin{corollary} \label{corollary: safeguarding-results}
	\corConv
\end{corollary}

\begin{table*}[t]
 	\caption{Rules for updating $\mu_k$. Here $\alpha,\theta\in(0,1)$, $\beta\in(0,\infty)$, and 
 	$C_k$ is the statement   $\|y^k-T(y^{k};\ d)\| +\beta \|y^k - x^k\|\leq \alpha\mu_k$. 
 	}
 	\label{table: update-mu}
 	\vspace{2pt}
 	\label{table: safeguarding-choices} 
 	\centering 
 	\begin{tabular}{c|c} 
 		{\sc Name} & {\sc Update Formula}
 		\\[3 pt]\hline 		
 		\begin{minipage}{0.2\textwidth}
 	        \begin{center}
 	            Geometric Sequence \\
 	        GS($\theta$)
 	        \end{center}
 	    \end{minipage}
 	    & 
 	    
 	    \begin{minipage}{0.65\textwidth}
 	    \vspace*{3pt}
 	    \begin{center}
 		$\mu_{k+1} = \begin{cases}\begin{array}{cl}
		 \theta \mu_k &  \mbox{if $C_k$ holds,} \\ 
 		\mu_k &   \mbox{otherwise.}
 		\end{array}   				
 		\end{cases}
 		$ \\[2pt]
 		{\it Decrease $\mu_k$ by   factor $\theta$ for ``good'' residuals.}
 		\vspace*{4 pt}
 		\end{center} 		
 		\end{minipage}
 		\\\hline
 		  		
 		\begin{minipage}{0.2\textwidth}
 	        \begin{center}
 	            Recent Term \\
 	            RT
 	        \end{center}
 	    \end{minipage} 		 
 	    &
 	    \begin{minipage}{0.70\textwidth}
 	    \vspace*{3pt}
 	    \begin{center}
 		$\mu_{k+1} = \begin{cases}\begin{array}{cl}
		 	\|x^{k+1}-T(x^{k+1};d)\| + \beta \|x^{k+1}-x^k\| &  \mbox{if $C_k$ holds,} \\ 
 		\mu_k &   \mbox{otherwise.}
 		\end{array}   				
 		\end{cases}
 		$ \\[2pt]
 		{\it Take $\mu_k$ to be most recent ``good'' residual.}
 		\vspace*{4pt}
 		\end{center}
 		\end{minipage}
 		\\\hline
 		
 		
 		\begin{minipage}{0.20\textwidth}
 	        \begin{center}
 	            Exponential\\  
 	            Moving Average \\ 	            
 	            EMA($\theta$)
 	        \end{center}
 	    \end{minipage} 		 
 	    &
 	    \begin{minipage}{0.73\textwidth}
 	    \vspace*{3pt}
 	    \begin{center}
 		$\mu_{k+1} = \begin{cases}\begin{array}{cl}
		 	\theta \left( \|x^{k+1}-T(x^{k+1};d)\| +   \beta \|x^{k+1}-x^k\|\right)+ (1-\theta)\mu_{k-1}  &  \mbox{if $C_{k}$ holds,} \\ 
 		\mu_k &   \mbox{otherwise.}
 		\end{array}   				
 		\end{cases}
 		$ \\[2pt]
 		{\it  Exponentially average $\mu_k$ with the latest ``good'' residuals.}
 		\vspace*{4pt}
 		\end{center}
 		\end{minipage}
 		\\\hline

 	    
 	\end{tabular} 
 \end{table*}

We summarize the safeguard schemes in Table \ref{table: safeguarding-choices} as follows. The GS method decreases $\mu_k$ be a fixed geometric factor at each update. The EMA method exponentially  averages all past and current residual sums where $\mu_k$ is/was modified.  The RT method sets $\mu_k$ to be the  last     residual norm sum to be ``good'', \ie satisfy the $C_k$ inequality. We find   EMA to be the most practical safeguard due to its adaptive nature.
    
\begin{remark}
 {The appropriate frequency for the safeguard to trigger can be estimated by tuning L2O parameters for optimal performance on a training set {\it without} safeguarding and then using a validation set to test various safeguards with the L2O scheme. To avoid possible confusions, note \textit{we are not trying to prove the convergence of any standalone L2O algorithm}. We instead 1) alarm on an L2O update when it may break convergence, 2) replace it with a fallback update, and 3) show the resulting ``hybrid optimization" converges to a solution of the provided optimization problem.}
\end{remark}


\section{Training and Averaged Operator Selection} 
\label{sec: NN}

Safe-L2O may be executed via inferences of a feed forward neural network. 
The input into the network is the data $d$, often in vector form.
Input $d$ is usually the \textit{observation} we have, based on which we optimize over the variable of interest. 
For example,   the LASSO problem   in (\ref{eq: LASSO-problem}) is used for  sparse coding, where the goal is to recover a unknown sparse vector $x^\star$ from its noisy measurements $d = Ax^\star + \varepsilon$. Assuming $A$ is known beforehand, the input to the Safe-L2O model is the observation $d$. In other cases, the dictionary $A$ can change and also be part of the input to the model. We include case-by-case discussions about what the inputs   for each numerical example.

Each layer of the Safe-L2O model is designed so that its input is $x^k$, to which it applies either an L2O or fallback update (following the Safe-L2O method), and outputs $x^{k+1}$ to the next layer. 
The set  over which  $\Theta$ is minimized, may be chosen with great flexibility.
For each application of the algorithm, the fallback operator  depends upon the  data $d$. 

The ``optimal'' choice of parameters $\Theta$ depends upon the application.  
Suppose each $d$ is drawn from a common distribution $\mathcal{D}$.   
Then a choice of ``optimal'' parameters $\Theta^\star$   may be identified as those for which the expected value of $\phi(x^K;d)$ is minimized among $d\sim \mathcal{D}$,
where $\phi(\cdot\ ;d):\bbR^n\rightarrow \bbR$ is an appropriate cost function and $K$ is a fixed positive integer.
Mathematically, this means   $\Theta^\star$  solves the problem
\begin{equation}
\min_{\Theta} \mathbb{E}_{d\sim \mathcal{D}}[  \phi(x^K(\Theta;\ d);\ d) ],
\label{eq: learning-problem}
\end{equation}
where we emphasize the dependence of $x^K$ on $\Theta$ and $d$   by writing $x^K = x^K(\Theta;\ d)$. 
Examples for $\phi$ include the original objective function (\ie  $\phi(x;\ d)=f(x;\  d)$) and the fixed point residual $\|x-T(x;\ d)\|$. 
We approximately solve (\ref{eq: learning-problem})  by sampling data $\{d^n\}_{n=1}^N$   from   $\mathcal{D}$ and minimizing an empirical loss function.
Summaries for   training   are outlined in the appendices.
Note different learning problems than (\ref{eq: learning-problem}) may be used (\eg  the min-max problem used by adversarial networks \cite{goodfellow2014generative}). 

 \begin{figure*}
    \centering
    \subfloat[Performance on {\it seen} distribution]{\includegraphics[height=2.25in]{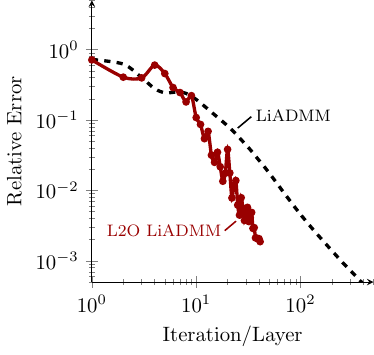}}
    \subfloat[Performance on {\it unseen} distribution]{\includegraphics[height=2.35in]{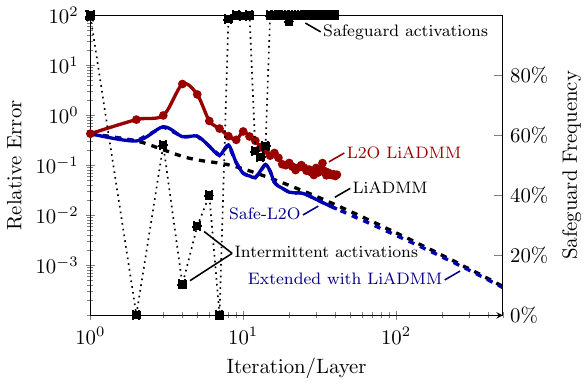}}
    \caption{Plot of error versus iteration for LiADMM.
     {Here LiADMM is the classic algorithm, L2O LiADMM is the L2O operator in Algorithm~\ref{alg: L2O_Abstract} and Safe-L2O is the safeguarded version of L2O LiADMM in Algorithm~\ref{alg: Safe-L2O}.}
  Inferences used $\alpha = 0.99$ and $\mbox{EMA}(0.75)$. In (b), how often the L2O update is ``bad'' and the  safeguard activates for Safe-L2O is indicated in reference to the right vertical axis. The safeguard is used about 10\% and 30\% of the time when $k=4$ and $k=5$, respectively.}
    \label{fig: LiADMM-plots}
\end{figure*}

\section{Numerical Examples}
\label{sec: numerical-examples} 

This section presents examples using Safe-L2O.\footnote{All of the codes for this work can be found on GitHub here: (link will be added after review process).} We numerically investigate 
i) the convergence rate of Safe-L2O relative to corresponding conventional algorithms,
ii) the efficacy of safeguarding procedures when inferences are performed on data for which L2O fails intermittently, 
and iii) the convergence of Safe-L2O schemes even when the application of  {L2O operators} is not theoretically justified. 
We first use $\LTO$ from ALISTA \cite{LiuChenWangYin2019_alista} on a synthetic LASSO problem. We then use LISTA on a LASSO problem for image processing, differentiable linearized ADMM   \cite{XieWuZhongLiuLin2019_differentiable} on a sparse coding problem.
In all   three types of problems, the input data $d$ to the L2O models is
 {generated by $d = Ax^\star + \varepsilon$ where $\varepsilon$ is white Gaussian noise and $x^\star$ is the hidden variable that we want to recover through a dictionary $A$, which is generated and beforehand and fixed.}
In these experiments, the ground-truth vector $x^\star$ are sampled from a distribution, which characterizes the distribution of data of interest along with the dictionary. We denote the distribution of the input data as $\mathcal{D}$. Besides these ``linear'' examples, we also validate Safe-L2O on a distribution of LASSO problems where the dictionary $A$ also changes and is part of the input to the L2O models. In this case, the distributions of the dictionaries and ground-truth vectors together characterize the input distribution $\mathcal{D}$.

In each example, $f_d^\star$ denotes the optimal value of the objective $f(x;d)$ among all possible $x$. 
Performance  is measured using a modified relative objective error:
\begin{equation}
    \mbox{Relative Error} = \mathcal{R}_{f,\mathcal{D}}(x) \triangleq   
    \dfrac{ \mathbb{E}_{d\sim \mathcal{D}} [f(x;\ d) - f_d^\star] }{\mathbb{E}_{d\sim \mathcal{D}}[ f_d^\star] },
    \label{eq: relative-error-formula}
\end{equation} 
where the expectations are estimated numerically (see the appendices for details).  
We use (\ref{eq: relative-error-formula}) rather than the expectation of relative error to avoid high sensitivity to outliers.  

Our numerical results are shown in several plots. 
When each iterate $x^k$ is computed using data $d$ drawn from the same distribution $\mathcal{D}_s$ that was used to train the L2O algorithm, we say the performance is on the ``seen'' distribution $\mathcal{D}_s$.  
These plots form the primary illustrations of the speedup of L2O algorithms as compared to conventional optimization algorithms.
When each $d$ is drawn from a distribution ${\mathcal{D}}_u$ that is {\it different} than $\mathcal{D}_s$, we  refer to ${\mathcal{D}}_u$ as the {\it unseen} distribution.
These plots show the ability of the safeguard to ensure convergence. A dotted plot with square markers is also added to show the  frequency of safeguard activations among test samples, with the reference axis on the right hand side of the plots.
We extend the Safe-L2O methods beyond their training iterations by applying the fallback operator $T$; we demarcate where this extension begins by changing the Safe-L2O plots from solid to dashed lines. 
As Safe-L2O convergence holds whenever $\beta > 0$, we can set $\beta$ to be arbitrarily small (\eg below machine precision); for simplicity, we use $\beta=0$ in the experiments (as, even in this case, it can be shown that iterates approach the solution set).
Implementation details for each experiment are   in the appendices.

\subsection{ALISTA for LASSO}\label{subsec: ALISTA}
Here we consider the  LASSO problem for sparse coding.
Let $x^\star \in \bbR^{500}$ be a sparse vector and   $A\in \bbR^{250\times 500}$ be a dictionary.
We assume access is given to noisy linear measurements $d\in \bbR^{250}$, where $\varepsilon\in\bbR^{250}$ is additive Gaussian white noise and
$
d = Ax^\star + \varepsilon.
$
Even for underdetermined systems, when $x^\star$ is sufficiently sparse and  $\tau \in (0,\infty)$ is an appropriately chosen regularization parameter, $x^\star$ can often be reasonably estimated by solving the LASSO problem 
\begin{equation}
\min_{x\in\bbR^n} f(x;\ d) \triangleq \dfrac{1}{2}\|Ax-d\|_2^2 + \tau \|x\|_1,
\label{eq: LASSO-problem}
\end{equation}
where $\|\cdot\|_2$ and $\|\cdot\|_1$ are the $\ell_2$ and $\ell_1$ norms, respectively.
A classic method for solving (\ref{eq: LASSO-problem}) is the iterative shrinkage thresholding algorithm (ISTA) (\eg see \cite{DaubechiesDefriseMol2004_iterative}).\footnote{This is a special case of the proximal-gradient in Table \ref{table: NE-operators}.} \citet{LiuChenWangYin2019_alista} present the L2O scheme ALISTA that we implement here. 
This L2O model $\LTO$ is parameterized by $\Theta^k = (\theta_k,\gamma_k) \in \bbR^2$. \subsection{Linearized ADMM} \label{subsec: LiADMM}
 Let $A\in\bbR^{250\times 500}$ and $d\in\bbR^{250}$ be as in the LASSO problem.
 Here we  apply the L2O scheme differentiable linearized ADMM (LiADMM) of \citet{XieWuZhongLiuLin2019_differentiable} to the closely related sparse coding problem
 \begin{equation}
 \min_{x\in\bbR^n} \|Ax-d\|_1 + \tau \|x\|_1.
 \label{eq: LiADMM-problem}
 \end{equation}
 The L2O  network $\LTO$ and fallback linearized ADMM (LiADMM) operator $T$ are provided in the appendices along with implementation details.
 Plots are provided in Figure \ref{fig: LiADMM-plots}.
 
\begin{figure}
    \centering
    \subfloat[Performance on {\it seen} distribution]{\includegraphics[height=2.in]{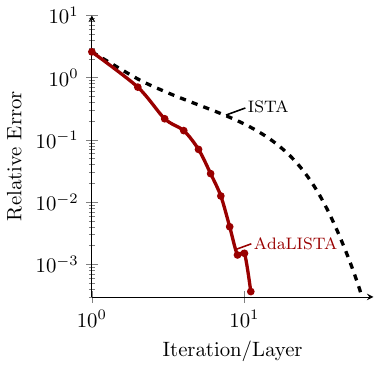}}
    \caption{Plot of error versus iteration for AdaLISTA example.
     {Here ISTA is the classic algorithm, AdaLISTA is the L2O operator in Algorithm~\ref{alg: L2O_Abstract} and Safe-L2O is the safeguarded version of AdaLISTA in Algorithm~\ref{alg: Safe-L2O}.}
}
    \label{fig:AdaLISTA-plots-seen}
\end{figure}

 \subsection{LISTA for Natural Image Denoising} \label{subsec: RW-LISTA}
To evaluate our safeguarding mechanism in a more realistic setting, we apply safeguarded LISTA to a natural image denoising problem. In this subsection, we learn a LISTA-CP model \cite{ChenLiuWangYin2018_theoretical} to perform natural image denoising.
During training,   L2O LISTA-CP model is trained to recover clean images from their Gaussian noisy counterparts by solving (\ref{eq: LASSO-problem}).
In (\ref{eq: LASSO-problem}), $d$ is the noisy input to the model, and the clean image is recovered with $\hat{d} = Ax^\star$, where $x^\star$ is the optimal solution. The dictionary $A\in\bbR^{256\times 512}$ is learned on the BSD500 dataset \cite{BSD500} by solving a dictionary learning problem \cite{xu2014fast}.
During testing, however, the learned L2O LISTA-CP is applied to unseen pepper-and-salt noisy images. Comparison plots are provided in Figure \ref{fig: LISTA-plots}.

 \begin{figure}
    \centering
    \subfloat[Performance on {\it unseen} distribution]{\includegraphics[width=0.47\textwidth]{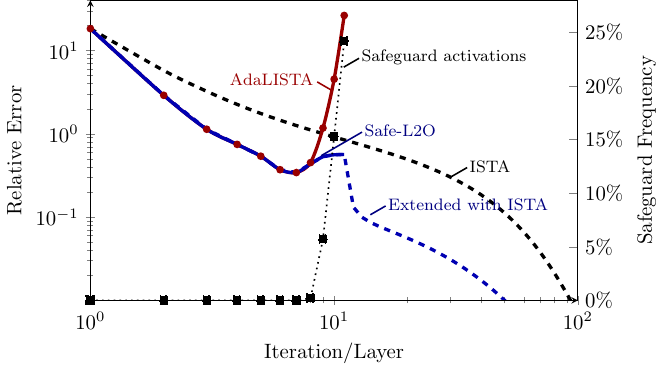}}
    \caption{Plot of error versus iteration for AdaLISTA example.
     {Here ISTA is the classic algorithm, AdaLISTA is the L2O operator in Algorithm~\ref{alg: L2O_Abstract} and Safe-L2O is the safeguarded version of AdaLISTA in Algorithm~\ref{alg: Safe-L2O}.}
	Inferences used $\alpha = 0.99$ and $\mbox{EMA}(0.25)$.  In (b), how often the L2O update is ``bad'' and the  safeguard activates for Safe-L2O is indicated in reference to the right vertical axis. This plot shows the safeguard is used intermittently for $k > 7$.
}
    \label{fig:AdaLISTA-plots-unseen}
\end{figure}

\subsection{AdaLISTA: Dictionary as Part of Inputs}
\label{subsec: AdaLISTA}
Here we consider the same LASSO problem (\ref{eq: LASSO-problem}) as in Subsection \ref{subsec: ALISTA} but make the dictionary $A$ part of the inputs to the L2O model (\ie able to change across samples).
 \citet{aberdam2021ada} present a new L2O scheme AdaLISTA that is trained to quickly solve a distribution of LASSO problems with varying dictionaries. AdaLISTA has a different parameterization scheme from the original LISTA \cite{GregorLeCun2010_learning} to enable the adaptivity to the dictionaries. The L2O model $\LTO$ in AdaLISTA is parameterized by $\zeta = (\theta,\gamma) \in \bbR^2$ and two weight matrices $W_1, W_2\in\bbR^{m\times m}$ where the dictionary $A$ has shape   $m\times n$. The two matrices are shared by operators in all iterations.

We mainly follow the settings in \cite{aberdam2021ada}. Specifically, we let $x^\star\in\bbR^{70}$ be sparse vectors with random supports of cardinality $s=6$ and a single \textbf{fixed} dictionary $A^\prime\in\bbR^{50\times70}$. We assume access is given to noiseless linear measurements $d\in\bbR^{50}=Ax^\star$, where $A$ is uniformly sampled from all column-permuted variants of $A^\prime$.   Figure~\ref{fig:AdaLISTA-plots-seen} and \ref{fig:AdaLISTA-plots-unseen} show summary plots, from which we have similar observations as in the ALISTA experiments with a single fixed dictionary. 

\section{Conclusions}
\label{sec: conclusion}

Numerous insights may be drawn from our examples. The first observation is, roughly speaking,  each L2O scheme in our numerical examples  reduces computational costs by at least one order of magnitude when applied to data from the same distribution as the training data (as compared to analytic optimization algorithms). This is consistent with results of previous works. More importantly, plots in   Figures \ref{fig: ALISTA-plots}b and \ref{fig: LISTA-plots}b, and \ref{fig:AdaLISTA-plots-unseen} show   the safeguard steers updates to convergence when they would otherwise diverge or converge slower than the conventional algorithm. That is,  Safe-L2O converges with data distinct from training while the nonsafeguarded L2O schemes diverge.
 
This work proposes a framework for ensuring convergence of L2O algorithms.
Sequences generated by our Safe-L2O method provably converge to solutions of the optimization problems.   Our  Safe-L2O algorithm is also easy to implement as a wrapper around trained neural networks. Numerical experiments demonstrate rapid convergence by Safe-L2O methods and effective safeguarding when the L2O schemes appear to otherwise diverge. 
Future work will provide a better data-driven fallback method  and investigate stochastic extensions.

\newpage
\bibliography{Safe-L2O-AAAI-23}


\newpage
\appendix
\section{Numerical Example Supplement Materials} 

We begin with a general note on training and then consider individual experiments.
The training procedure for each experiment was conducted layerwise. By this, we mean that first the network weights were tuned using one layer. Then the network was trained with two layers, using the learned weights from the first layer as an initialization/warm start for that layer's parameters. This was then repeated until the final number $K$ of layers was reached.
This approach is built upon the intuition that, because the network layers model an optimization algorithm that progressively improves, each successive layer's weights likely depend upon the previous layers weights. \\

\noindent {\bf Supplement for ALISTA.}
In similar manner to \cite{ChenLiuWangYin2018_theoretical} and \cite{LiuChenWangYin2019_alista}, we use the following setup.
We take $m = 250$, $n=500$, and {$\tau = 0.001$}.
Each entry of the dictionary $A$ is sampled i.i.d from the standard Gaussian distribution, i.e., $a_{ij} \sim \mathcal{N}(0,1/m)$.
Having these entries, we then normalize each column of $A$, with respect to the Euclidean norm.
Each $d$ in the distribution $\mathcal{D}_s$ of data used to train the neural network is constructed using $d=Ax^\star+\varepsilon$ with noise $\varepsilon \sim 0.1\cdot \mathcal{N}(0,1/m)$ and each entry of $x^\star$ as the composition of Bernoulli and Gaussian distributions, i.e., $x^\star_j \sim \mbox{Ber}(0.1) \circ \mathcal{N}(0,1)$ for all $j \in [n]$.  
Each $d$ in the {\it unseen} distribution $\mathcal{D}_u$ is computed using the same distribution of noise $\varepsilon$ as before and using  $x_j^\star \sim \mbox{Ber}(0.2)\circ \mathcal{N}({0},2)$.
Our data set consists of 10,000 training samples and 1,000 test samples.

Given $x^1\in \bbR^n$, the ISTA method iteratively computes 
\begin{equation} 
x^{k+1} 
\triangleq  T(x^k)
\triangleq   \eta_{\tau/L} \left( x^k -\dfrac{1}{L} A^T(Ax^k - d) \right),
\label{eq: T-ISTA}
\end{equation}
where $L=\|A^t A\|_2$ and $\eta_\theta$ is the soft-thresholding function defined by component-wise operations:
\begin{equation}
\eta_\theta(x) \triangleq   \mbox{sgn}(x) \cdot \max\{ 0, |x|-\theta \}.
\label{eq: soft-thresholding-def-scalar}
\end{equation}

We applied  Safe-L2O to the LASSO problem above by using $T$ defined in (\ref{eq: T-ISTA}) and the $\LTO$ update operation from ALISTA \cite{LiuChenWangYin2019_alista}.
The L2O operator $\LTO$ is parameterized by $\zeta = (\theta,\gamma)$ for positive scalars $\theta$ and $\gamma$ and defined by 
\begin{equation} 
\LTO(x;\ \zeta)
\triangleq   \eta_{\theta}\left( x - \gamma W^T (Ax - d) \right),
\label{eq: ALISTA}
\end{equation}
where 
\begin{align}
& W \in  \argmin_{M \in \bbR^{m\times n}} \|M^T A\|_F, \nonumber  \\
& \mbox{subject to}~ (M_{:,\ell})^T A_{:,\ell} = 1, \ \ \mbox{for all $\ell\in [n]$,}
\label{eq: LASSO-W-def}
\end{align} 
and $\|\cdot\|_F$ is the Frobenius norm and the Matlab notation $M_{:,\ell}$ is used to denote the $\ell$th column of the matrix $M$. 
The parameter $\Theta = (\theta^k,\gamma^k)_{k=1}^K$ consists of $2K$ scalars.   \\

\noindent {\bf Supplement for LiADMM.} 
LiADMM is used to solve problems of the form
\begin{equation}
    \min_{x\in\bbR^n,z\in\bbR^m} f(x) +  g(z) 
    \ \ \ \mbox{s.t.} \ \ \ 
    Ax + Bz = d,
    \label{eq: ADMM-problem}
\end{equation}
for which LiADMM  generates sequences $\{x^k\}$, $\{z^k\}$ and $\{u^k\}$ defined by the updates
{\small 
    \begin{align}
        x^{k+1} 
        & \triangleq   \prox{\beta f}\left(x^k - \beta A^T \left[ u^k + \alpha \left(Ax^k + Bz^k - d\right) \right]\right), \nonumber\\ 
        z^{k+1}
        & \triangleq   \prox{\gamma g}\left( z^k - \gamma B^T \left[ u^k + \alpha \left(Ax^{k+1} + Bz^k - d\right) \right] \right), \nonumber\\
        u^{k+1} 
        & \triangleq   u^k + \alpha \left(Ax^{k+1} + Bz^{k+1} - d\right),\label{eq: LADMM-update}
    \end{align}
}with given scalars $\alpha,\beta,\gamma \in (0,\infty)$.
The problem (\ref{eq: LiADMM-problem}) may be written in the form of (\ref{eq: ADMM-problem}) by taking $f =   \tau \|\cdot\|_1$, $g = \|\cdot\|_1$, and $B = -\mbox{Id}$. 
In this case, the proximal operators in (\ref{eq: LADMM-update})  reduce to soft-thresholding operators. 
Although not given in Table \ref{table: NE-operators}, at each iteration of LiADMM there is an associated iterate $\nu^k$ for which the update $\nu^{k+1}$ is generated by applying an averaged operator $T$ to the current iterate $\nu^k$. As shown in Lemma \ref{lemma: LADMM-fixed-point-residual} of Subsection A.2, for our setup, assuming $ \gamma =1/\alpha$ and $\alpha \beta \|A^TA\|_2 < 1$, the norm of the associated fixed point residual at the iterate $\nu^k$ is given by 
    \begin{equation}
        \|\nu^k  - T(\nu^{k})\|
        =   \left\|  \left[ \begin{array}{c} Ax^{k+2} - z^{k+1} - d \\ P(x^{k+2}-x^{k+1})  \end{array} \right]\right\|,
         \label{eq: LADMM-S}
\end{equation}  
with $P$ defined below in (\ref{eq: PQ-matrix-defs}). 
For notational clarity, the term $x^k$ in the SKM and LSKM schemes is replaced in this subsection by the tuple $(x^k,z^k,u^k)$. 
This is of practical importance too since it is the sequence $\{x^k\}$ that converges to a solution of (\ref{eq: LiADMM-problem}).

We now modify the iteration (\ref{eq: LADMM-update}) for the problem (\ref{eq: LiADMM-problem}) to create the LiADMM L2O scheme.
We generalize soft-thresholding to  vectorized soft-thresholding for $\beta  \in \bbR^n$ by
\begin{equation}
    \eta_\beta (x)
    = (\eta_{\beta_1}(x_1),\ \eta_{\beta_2}(x_2),\ \ldots,\ \eta_{\beta_n}(x_n)).
    \label{eq: soft-thresholding-def-vector}
\end{equation}
We assume $\eta_\beta$ represents the scalar soft-thresholding in (\ref{eq: soft-thresholding-def-scalar}) when $\beta\in\bbR$ and the vector generalization (\ref{eq: soft-thresholding-def-vector}) when $\beta\in\bbR^n$.  	  
Combining ideas from ALISTA \cite{LiuChenWangYin2019_alista} and LiADMM \cite{LiuShenGu2019_linearized},  given $(x^k, z^k, \nu^k)\in\bbR^n\times\bbR^m\times\bbR^m$, $\alpha^k, \gamma^k, \xi^k \in \bbR^m$, $\beta^k, \sigma^k \in \bbR^n$, $W_1 \in \bbR^{n\times m}$, and $W_2 \in \bbR^{m\times m}$, set
{\footnotesize 
\begin{equation}
    \begin{aligned}
        \tilde{x}^{k+1} 
        & \triangleq   \eta_{\beta^k  }\left(x^k - \sigma^k \circ (W_1^k)^T \left[ \nu^k + \alpha_k \circ  \left(Ax^k - z^k - d\right) \right]\right), \\
        \tilde{z}^{k+1}
        & \triangleq   \eta_{ \gamma^k}\left( z^k - \xi^k \circ (W_2^k)^T \left[ \nu^k + \alpha_k\circ \left(A\tilde{x}^{k+1}-z^k - d\right) \right] \right), \\
        \tilde{\nu}^{k+1} 
        & \triangleq   \nu^k + \alpha_k  \circ \left(A\tilde{x}^{k+1} - \tilde{z}^{k+1} - d\right),
    \end{aligned}
    \label{eq: LADMM-update-learnable} 
\end{equation}}
with  element-wise products denoted by $\circ$. 
For the parameter $\zeta^k \triangleq   (  \alpha^k, \beta^k, \gamma^k, \sigma^k, \xi^k, W_1^k, W_2^k )$, then define 
\begin{equation}
    \LTO(x^k,z^k,\nu^k;\ \zeta^k)
    \triangleq   (\tilde{x}^{k+1}, \tilde{z}^{k+1}, \tilde{\nu}^{k+1}).
    \label{eq: LADMM-TL2O}
\end{equation}	
Fixing the number of iterations $K$, the learnable parameters from (\ref{eq: LADMM-update-learnable}) used in the LSKM Algorithm may be encoded by  $
\Theta = (\zeta^k)_{k=1}^K= \left(  \alpha^k, \beta^k, \gamma^k, \sigma^k, \xi^k, W_1^k, W_2^k\right)_{k=1}^K,$ 
consisting of $(2n + 3m+mn+m^2)K$ scalars. To stabilize the training process, we share the $W_1$ across all layers in practice. We also fix $W_2=-\mbox{Id}$ and only learn the step sizes $\xi^k$ before it. Moreover, different from other experiments, we add an additional end-to-end training stage after we finish the normal layer-wise training described at the beginning of the Appendix, which is found helpful to improve the recovery performance at the final output layer. 

{For data generation, we use the same settings for the dictionary A and sparse vectors $x^\star$ as in the experiments of Subsection \ref{subsec: ALISTA}. But we make a small modification to to generation of noise $\varepsilon$ due to the $\ell_1$-$\ell_1$ objective, where we sample $\varepsilon$ from the same (seen and unseen) distribution as $x^*$, i.e. the noises are also sparse. We choose $\tau = 1.0$.\footnote{The code for LiADMM experiment is based on public repo of \cite{XieWuZhongLiuLin2019_differentiable}, found at \url{https://github.com/zzs1994/D-LADMM}.}} \\

 
\noindent {\bf Supplement  for LISTA.}
We choose LISTA with coupled weight (i.e. LISTA-CP) in \cite{ChenLiuWangYin2018_theoretical} for natural image denoising.
This is done for two reasons: 1) LISTA-CP has a larger capacity to work well in complex  real-world settings; 2) the dictionary $A$ learned from natural images is much more ill-conditioned than the Gaussian matrix in Subsection \ref{subsec: ALISTA}. 
The dictionary $A\in\bbR^{256\times 512}$ is learned by solving a dictionary learning problem \cite{xu2014fast} on $16\times 16$ image patches extracted from BSD500 dataset \cite{BSD500} \footnote{We use the dictionary provided in the source code of \cite{XieWuZhongLiuLin2019_differentiable}, found at \url{https://github.com/zzs1994/D-LADMM}.}.

The training set that we use includes 50,000 images patches of size $16\times 16$ randomly extracted from 200 images in the BSD500 training set, 250 patches each. White Gaussian noises with standard deviation $\sigma=30$ (pixel values range from $0$ to $255$) are added to the patches. For testing, we use the ``Peppers'' image as the ground truth, which is 1,024 non-overlapping patches. The noisy testing patches in the \textit{seen} distribution is generated in the same way as the training set. The testing patches in the \textit{unseen} distribution is polluted by \textit{pepper-and-salt} noises with density $r=70\%$.

The update operation from LISTA-CP \cite{ChenLiuWangYin2018_theoretical} is similar to (\ref{eq: ALISTA}) but has one more matrix weight to learn in each layer:
\begin{equation} 
\LTO(x;\ \zeta)
\triangleq   \eta_{\theta}\left( x - \tilde{W}^T (Ax - d) \right),
\label{eq: LISTA-CP}
\end{equation}
where $\zeta = \{ \theta, \tilde{W} \}$ parameterizes the update operator with non-negative scalar $\theta$ and a matrix weight $W\in\bbR^{256\times 512}$. The tunable parameters $\Theta = (\theta^k, \hat{W}^k)_{k=1}^K$ include $K$ scalars and $K$ matrices. We take $K=20$, i.e. we train a 20-layer L2O LISTA-CP model. 
To train the L2O LISTA-CP model, we use (\ref{eq: LASSO-problem}) as loss function with $A$ mentioned above and $\tau=0.01$.  \\


{

\noindent {\bf Supplement to AdaLISTA.}
In similar manner to \cite{aberdam2021ada}, we use the following setup.
We take $m=50$, $n=70$, and $\tau=0.1$. Each entry of the dictionary $A^\prime$ is sampled i.i.d from the standard Gaussian distribution, i.e., $a^\prime_{ij} \sim \mathcal{N}(0,1/m)$.
Having these entries, we then normalize each column of $A^\prime$, with respect to the Euclidean norm.

Instead using a fixed dictionary in Subsection \ref{subsec: ALISTA}, for each sample, we generate a variant dictionary $A$ from $A^\prime$ by permutating its columns. Then the linear measurement $d$ is constructed using $d=Ax^\star$. The measurement-dictionary pair $(d, A)$ is the input to the Safe-L2O scheme.
For each $d$ in the {\it seen} distribution $\mathcal{D}_s$, $x^\star$ has supports of cardinality $s=6$ that are uniformly sampled over all coordinates and the non-zero entries are sampled from the standard Gaussian distribution.
For each $d$ in the {\it unseen} distribution $\mathcal{D}_u$, $x^\star$ has supports of larger cardinality $s=10$ and the non-zero entries are sampled from Gaussian distribution $\mathcal{N}({0},2)$.
Our data set consists of 20,000 training samples and 1,000 test samples.

We applied  Safe-L2O to LASSO by using $T$ defined in (\ref{eq: T-ISTA}) and the $\LTO$ update operation from AdaLISTA \cite{aberdam2021ada}.
The L2O operator $\LTO$ is parameterized by $\zeta = (\theta,\gamma, W_1, W_2)$ for positive scalars $\theta$ and $\gamma$ and matrices $W_1,W_2\in\bbR^{m\times m}$ and defined by 
\begin{equation} 
\LTO(x;\ \zeta)
\triangleq   \eta_{\theta} \left(
    (I - \gamma A^T W_2^T W_2 A) x + \gamma A^T W_1^T d
\right),
\label{eq: ALISTA}
\end{equation}
where $W_1$ and $W_2$ are shared by operators in all iterations.
The parameter $\Theta = \{(\theta^k,\gamma^k)_{k=1}^K, W_1, W_2\}$ consists of $2K$ scalars and two $m\times m$ matrices.

}

\section{Proofs}
This section contains proofs for a lemma, the main theorem and its associated corollary. A final  lemma is   given for   the safeguarding condition used for Linearized ADMM.   

\newcommand{\sI}{\mathcal{I}} 
 
\begin{lemma} \label{lemma: xk-summable-error}
    If $\{x^k\}$ is a sequence generated by Safe-L2O and Assumptions \ref{ass: T} and \ref{ass: mu-k} hold,
	then $\{x^k\}$ is bounded and there is a summable sequence $\{\delta_k\}\subset [0,\infty)$ such that, for all $k\in\bbN$ and  $x^\star\in \fix{T(\cdot;d)}$,
	\begin{equation}
	    \|x^{k+1} - x^\star\| \leq \|x^k - x^\star\| + \delta_k.
	    \label{eq: xk-summable-error}
	\end{equation}
\end{lemma}
\begin{proof}
    Fix any $x^\star \in \fix{T(\cdot;d)}$.    
    Set $\sI_1 \subseteq \bbN$  to be the set of all indices such that the update relation $x^{k+1} = T_{\Theta^k}(x^k;d)$ holds, and set $\sI_2 \triangleq \bbN - \sI_1$ so that $\bbN = \sI_1 \cup \sI_2.$
    Then
    \begin{equation}
        \mu_{k+1} \leq  \zeta \mu_k, \ \ \ \mbox{for all $k\in\sI_1$,}
    \end{equation}    
    which implies, by induction and the safeguard inequality,
    \begin{subequations}
    \begin{align}
        \sum_{k \in \sI_1} \|x^{k+1} - x^{k}\|  
        & \leq    \sum_{k\in\sI_1} \alpha \mu_k \\
        & \leq  \alpha \mu_1 \cdot \sum_{k\in\sI_1}  \zeta^k  \\
        & \leq {\dfrac{\alpha \mu_1}{1-\zeta}} \\
        & {\triangleq B}, 
    \end{align}\label{eq: xk-summable-proof-01}\end{subequations}
    where the final inequality holds by the fact the series is geometric and the term $B$ is finite by Assumption \ref{ass: mu-k}.   
    
    A classic result (\eg see Corollary 2.15 in \cite{BauschkeCombettes2017_convex}) states, for all $x,y\in\bbR^n$ and $\theta\in\bbR$,
    \begin{subequations}
        \begin{align}
            \|\theta x + (1-\theta)y\|^2 
            & = \theta \|x\|^2 + (1-\theta)\|y\|^2 \\
            & - \theta(1-\theta)\|x-y\|^2.
        \end{align}
    \end{subequations} 
    Additionally, because $T(\cdot;d)$ is averaged, there exists $\alpha \in (0,1)$ such that $T(\cdot;d)= (1-\alpha)\II + \alpha Q$ for some 1-Lipschitz operator $Q$.
    Together these two facts imply, for all $k\in\sI_2$,  
    \begin{subequations}
    \begin{align}
        & \|x^{k+1}-x^\star\|^2\\
        & = \|T(x^k;d) - x^\star\|^2 \\
        & = \| (1-\alpha)(x^k - x^\star) + \alpha(Q(x^k)-x^\star)\|^2 \\
        & = (1-\alpha)\|x^k-x^\star\|^2 + \alpha\|Q(x^k)-Q(x^\star)\|^2 \\
        & - \alpha(1-\alpha) \|Q(x^k)-x^k\|^2\\
        & \leq \|x^k-x^\star\|^2 - \alpha(1-\alpha)\|Q(x^k)-x^k\|^2
        \\
        & \leq \|x^k - x^\star\|^2.
    \end{align}
    \end{subequations}
    Because the terms $\|x^{k+1}-x^k\|$ for $k\in\sI_1$ are summable by (\ref{eq: xk-summable-proof-01}) and  $\|x^{k+1}-x^\star\|\leq \|x^k-x^\star\|$ for $k\in \sI_2$,  the sequence $\{\delta_k\}$ defined by  $\delta_k = \|x^{k+1}-x^k\|$ for $k\in\sI_1$ and 0 otherwise is summable, which establishes (\ref{eq: xk-summable-error}) in the second claim of the lemma (upon application of the triangle inequality for $k\in\sI_1$).
    Moreover, applying this inequality inductively reveals, for all $k\in\mathbb{N}$,
    \begin{align}
        \|x^k\| 
        & \leq \|x^\star\| + \|x^{k+1}-x^\star\|\\
        & \leq \|x^\star\| + \|x^1 - x^\star\| + \sum_{\ell\in\sI_1} \delta_\ell\\
        &\leq \|x^\star\| + \|x^1-x^\star\| + B, 
    \end{align} 
    which proves boundedness of $\{x^k\}$.
\end{proof}

We restate the main theorem below and provide a proof.\\

\noindent {\bf Theorem \ref{thm: Safe-L2O-convergence}.}
    \textit{\thmConv} \\

\begin{proof}
    Let $\sI_1\subseteq\mathbb{N}$ be the set of all indices such   that the update relation $x^{k+1}=T_{\Theta^k}(x^k ;\ d)$ holds, and set $\sI_2 \triangleq \mathbb{N}-\sI_1$ so that $\mathbb{N}=\sI_1\cup\sI_2$. 
    If the L2O update is applied finitely many times (\ie $|\sI_1| < \infty$), then there is a finite index  after which the iteration coincides with the classic setting of Theorem 1, by which convergence  follows. In what remains,   assume $|\sI_1|=\infty$. 
    We first show $\{x^k\}$ contains a limit point in the fixed point set of $T(\cdot;d)$ (Step 1).
    This is   used to show the entire sequence $\{x^k\}$ converges to this limit point (Step 2).  \\

    \noindent {\bf Step 1.} By Lemma \ref{lemma: xk-summable-error}, the sequence $\{x^k\}$ is bounded, and so also is the subsequence $\{x^{n_k}\}$, with $\{n_k\}$ an enumeration of $\sI_1$. By the boundedness of $\{x^{n_k}\}$,    there exists a convergent subsequence $\{x^{m_k}\}\subseteq\{x^{n_k}\}\subseteq\{x^k\}$ with limit $x^\infty$.  Observe
    \begin{subequations}
        \begin{align}
         & \sum_{k=1}^\infty 
          \| T(x^{m_k})-x^{m_k}\| \\
        & \leq \sum_{k=1}^\infty \|T(x^{m_k+1}) - x^{m_k}\| \\
        & + \|T(x^{m_k})-T(x^{m_k+1})\| 
        \label{eq: limit-point-fixed-point-1}
        \\
        & \leq \sum_{k=1}^\infty \|T(x^{m_k+1}) - x^{m_k}\| \\
        &  + \|x^{m_k}-x^{m_k+1}\| 
        \label{eq: limit-point-fixed-point-2}\\
        & \leq \sum_{k=1}^\infty \|T(x^{m_k+1}) - x^{m_k+1}\| \\
        & + 2\|x^{m_k}-x^{m_k+1}\|  
        \label{eq: limit-point-fixed-point-3}     
        \end{align}
    \end{subequations}
     where the first and third inequalities are applications of the triangle inequality and the second is holds since $T$ is $1$-Lipschitz.
     Since each update in $\{x^{m_k}\}$ is ``good,'' 
     the first term in the $k$-th summand is bounded about by $\alpha \mu_k$ and the second term is bounded by $2\alpha \mu_k / \beta$.
        Whence
        \begin{subequations}
        \begin{align}
            \sum_{k=1}^\infty 
            \| T(x^{m_k})-x^{m_k}\| & \leq \sum_{k=1}^\infty  \alpha \left(1 + \dfrac{2}{\beta}\right) \mu_k  \\
            &\leq \alpha \left(1 + \dfrac{2}{\beta}\right) \mu_1 \cdot \sum_{k=1}^\infty \zeta^k  \\
            &\leq  \alpha \left(1 + \dfrac{2}{\beta}\right)\cdot \dfrac{  \mu_1}{1-\zeta},
            \label{eq: limit-point-fixed-point-6}
        \end{align}  \label{eq: limit-fixed-point}      \end{subequations}
        where the second inequality mirrors the induction in (\ref{eq: xk-summable-proof-01}) of the proof for Lemma \ref{lemma: xk-summable-error}.
        Since the sequence of partial sums in the series on the left of (\ref{eq: limit-fixed-point}) is monotonicallly increasing (due to nonnegative summands) and is bounded by (\ref{eq: limit-point-fixed-point-6}), the series converges. Thus, together with the continuity of norms and the Lipschitz continuity of $T$, we deduce $x^\infty \in \fix{T(\cdot\ ;\ d)}$ since
        \begin{equation}
            \| T(x^\infty) - x^\infty \| = \limk \|T(x^{m_k}) - x^{m_k}\| = 0.
        \end{equation}

    \noindent {\bf Step 2.} All that remains is to show the entire sequence $\{x^k\}$ converges to $x^\infty$. To this end, let $\varepsilon > 0$ be given. It suffices to show there exists $N \in \bbN$ such that
    \begin{equation}
        \|x^k - x^\infty \| \leq \varepsilon, \ \ \ \mbox{for all $k\geq N$.}
        \label{eq: thm-conv-proof-04}
    \end{equation} 
    Again utilizing Lemma \ref{lemma: xk-summable-error}, there exists a summable sequence $\{\delta_k\} \subset [0,\infty)$ such that
    \begin{equation}
        \|x^{k+1} - x^\star\| \leq \|x^k - x^\star\| + \delta_k,
    \end{equation}
    for all $k\in\bbN$ and $x^\star\in\fix{T(\cdot;d)}$. Since $\{\delta_k\}$ is summable, there exists $N_1\in\bbN$ such that
    \begin{equation}
        \sum_{k = N_1}^\infty \delta_k \leq \dfrac{\varepsilon}{2}.
        \label{eq: thm-conv-proof-05}
    \end{equation}
    Since $x^{m_k}\rightarrow x^\infty$, there exists  $N_2 \geq N_1 $ such that
    \begin{equation}
        \|x^{N_2} - x^\infty\| \leq  \dfrac{\varepsilon}{2}.
        \label{eq: thm-conv-proof-06}
    \end{equation}    
    Combining (\ref{eq: thm-conv-proof-05}) and (\ref{eq: thm-conv-proof-06}), repeated application of the triangle inequality reveals
    \begin{subequations} 
    \begin{align}
        \|x^{k} - x^\infty\|
        &\leq \|x^{N_2} - x^\infty\| + \sum_{\ell=N_2}^k \delta_\ell\\
        &\leq  \dfrac{\varepsilon}{2} + \dfrac{\varepsilon}{2}\\
        &= \varepsilon,
        \ \ \ \mbox{for all $k\geq N_2$.}
        \label{eq: thm-conv-proof-07}
    \end{align}
    \end{subequations}
    This verifies (\ref{eq: thm-conv-proof-04}), taking $N=N_2$, completing the proof.
\end{proof}

We restate Corollary \ref{corollary: safeguarding-results} below and then provide a proof.\\

\noindent {\bf Corollary \ref{corollary: safeguarding-results}.}
\textit{\corConv}\\

\begin{proof}
	The proof is parsed into three parts, one for each   choice of   sequence $\{\mu_k\}$ in Table \ref{table: safeguarding-choices}.\\
	
	\noindent {\it Geometric Sequence.}
	Define the sequence $\{\mu_k\}$ using, for each $k\in\bbN$, the Geometric Sequence update formula in Table \ref{table: safeguarding-choices}. 
	Then $\mu_{k+1}=\alpha \mu_k$ whenever an L2O update is used and whenever a fallback update is used for which the descent condition $C_k$ holds.
	This verifies (\ref{eq: ass-geometric-safeguard}) and that $\{\mu_k\}$ is monotonically decreasing.
	
	We verify (\ref{eq: ass-upper-bd-safeguard}) by induction.
	The base case holds by definition of $\mu_1$.
	Now suppose the inequality in (\ref{eq: ass-upper-bd-safeguard}) holds for a particular choice of $k$. If $C_k$ holds, then the inequality in Line 6 of Safe-L2O ensures 
	$\|T(x^{k+1};d)-x^{k+1}\|+\beta \|x^{k+1}-x^k\|\leq \alpha \mu_k = \mu_{k+1}$. If $C_k$ does not hold, then a fallback update is used, which with the nonexpansivity of $T$ yields
	\begin{equation}
	    \begin{aligned}
    	    & \ \|T(x^{k+1};d)-x^{k+1}\| \\
    	    = & \ \|T(T(x^{k};d);d) - T(x^k;d)\| \\
    	    \leq & \ \|T(x^k;d) - x^k\|
    	    \leq \mu_k = \mu_{k+1}. 
    	    \label{eq: T-ne-iteration}
	    \end{aligned}
	\end{equation}
	In either case, the inequality   (\ref{eq: ass-upper-bd-safeguard}) holds for the $(k+1)$-st case, thereby closing the induction. By the principle of induction, (\ref{eq: ass-upper-bd-safeguard}) holds. \\

	\noindent {\it Exponential Moving Average.} 
	Given $\theta \in (0,1)$, define the sequence $\{\mu_k\}$ using the EMA($\theta$) formula in Table \ref{table: safeguarding-choices}. 
	For each $k$ when $\mu_{k}$ changes value, observe
	\begin{equation}
	    \begin{aligned}
        	\mu_{k + 1}
        	& = \theta \|x^{k+1}-T(x^{k+1};d)\| + (1-\theta)\mu_{k}  \\
        	& \leq \theta \alpha \mu_{k} + (1-\theta)\mu_{k}
        	= (1+\alpha\theta -\theta) \mu_{k},
	    \end{aligned}
 	\end{equation}
 	and so (\ref{eq: ass-geometric-safeguard}) holds by the fact
 	\begin{subequations}
 	\begin{align}
 	    0 
 	    < \alpha
 	    & = \theta \alpha + (1-\theta)\alpha\\
 	    & < \theta \alpha + (1-\theta)\\
 	    & < \theta + (1-\theta)\\
 	    & = 1.
 	\end{align} 
 	\end{subequations}
 	This also shows $\{\mu_k\}$ is monotonically decreasing.
 	
	We verify (\ref{eq: ass-upper-bd-safeguard}) by induction.
	The base case holds by definition of $\mu_1$.
	Now suppose the inequality in (\ref{eq: ass-upper-bd-safeguard}) holds for a particular choice of $k$. If $C_k$ holds, then the inequality in Line 6 of Safe-L2O ensures 
	\begin{equation}
	    \|T(x^{k+1};d)-x^{k+1}\|
	    \leq \alpha \mu_k
	    \leq (1+\alpha \theta - \theta)\mu_k
	    = \mu_{k+1}.
	\end{equation}
	If $C_k$ does not hold, then a fallback update is used, which yields the inequality in (\ref{eq: T-ne-iteration}).
	In either case, the inequality in (\ref{eq: ass-upper-bd-safeguard}) holds,   closing the induction. By   induction, (\ref{eq: ass-upper-bd-safeguard}) holds. \\

 	\noindent {\it Recent Max.} Define the sequence $\{\mu_k\}$ using, for each $k\in\bbN$, the Recent Max update formula in Table \ref{table: safeguarding-choices}. 
 	Then
 	\begin{equation}
 	    \mu_{k+1} = \|T(x^{k+1};d)-x^{k+1}\| + \beta \|x^{k+1}-x^k\|
 	    \leq \alpha \mu_k
 	\end{equation}
 	whenever an L2O update is used and whenever a fallback update is used for which the descent condition $C_k$ holds.
	This verifies (\ref{eq: ass-geometric-safeguard}) and that $\{\mu_k\}$ is monotonically decreasing.
	
	We verify (\ref{eq: ass-upper-bd-safeguard}) by induction.
	As before, the base case holds.
	Now suppose the inequality in (\ref{eq: ass-upper-bd-safeguard}) holds for a particular choice of $k$. If $C_k$ holds, then 
	\begin{subequations}
	\begin{align}
	    \|T(x^{k+1};d)-x^{k+1}\|
	    &\leq  \|T(x^{k+1};d)-x^{k+1}\| \\
	    & + \beta \|x^{k+1}-x^k\|\\
	    &= \mu_{k+1} .
	\end{align}
	\end{subequations}
	If $C_k$ does not hold, then a fallback update is used, which yields the inequality in (\ref{eq: T-ne-iteration}).
	In either case, the inequality in (\ref{eq: ass-upper-bd-safeguard}) holds,   closing the induction.  
    \end{proof}

    \noindent Below is a lemma used to identify the safeguarding procedure for the LiADMM method. \\
     
    \begin{lemma}
    \label{lemma: LADMM-fixed-point-residual}
    Let $\{(x^k,z^k,\nu^k)\}$ be a sequence generated by  LiADMM as in (\ref{eq: LADMM-update}). If $\alpha \gamma \|B^t B\|_2 < 1$ and $\alpha \beta \|A^t A\|_2 < 1$, then for each index $k$ there is an associated iterate $\nu^k$ such that the update $\nu^{k+1}$ is generated by applying an averaged operator $T$ to $\nu^k$ with respect to the Euclidean norm,  \ie  $\nu^{k+1} = T(\nu^k)$. In addition, 
    for
    \begin{equation}
        \begin{aligned}
            P & \triangleq   \left( \frac{1}{\alpha\beta} \mbox{Id} -  A^T A\right)^{1/2},\\
            Q & \triangleq  \left(  \frac{1}{\alpha\gamma} \mbox{Id} -  B^T B\right)^{1/2},
        \end{aligned}
        \label{eq: PQ-matrix-defs}
    \end{equation}     
    the fixed point residual is given by
    \begin{equation}
            \|\nu^k  - T(\nu^{k})\|
            =   \left\|  \left[ \begin{array}{c} Ax^{k+2} + Bz^{k+1} - d \\ P(x^{k+2}-x^{k+1}) \\ -Q(z^{k+1}-z^k)\end{array} \right]\right\|.
        \label{eq: S-LADMM-update}
    \end{equation}   
    \end{lemma}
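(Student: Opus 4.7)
The plan is to recast the LiADMM update as a preconditioned proximal-point iteration applied to the KKT operator of the Lagrangian $\mathcal{L}$, and then read off the residual formula (\ref{eq: S-LADMM-update}) from the corresponding symmetric preconditioner. First, I introduce the shifted state $\nu^k := (x^{k+1}, z^k, u^k)$. This is chosen so that $\nu^{k+1} = (x^{k+2}, z^{k+1}, u^{k+1})$ can be computed purely from $\nu^k$: the $z$-update yields $z^{k+1}$ from $(x^{k+1}, z^k, u^k)$, the $u$-update then yields $u^{k+1}$, and finally the $x$-update at iteration $k+1$ gives $x^{k+2}$ using the identity $u^{k+1} + \alpha(Ax^{k+1} + Bz^{k+1} - d) = 2u^{k+1} - u^k$. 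Thus $T:\nu^k \mapsto \nu^{k+1}$ is a well-defined single-valued operator on $\R^n \times \R^m \times \R^p$.

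Next, writing out the subgradient-optimality condition of each proximal subproblem and substituting the $u$-update identity $\tilde u - u = \alpha(Ax + B\tilde z - d)$ to absorb the primal-residual terms, I show the three optimality conditions combine into a single inclusion $F(\nu^{k+1}) + M(\nu^{k+1} - \nu^k) \ni 0$, where $F$ is the maximally monotone KKT operator associated with $\mathcal{L}$ and
\[
M = \begin{pmatrix} \tfrac{1}{\beta}\mbox{Id} & 0 & A^{T} \\ 0 & \tfrac{1}{\gamma}\mbox{Id} - \alpha B^{T}B & 0 \\ A & 0 & \tfrac{1}{\alpha}\mbox{Id} \end{pmatrix}.
\]
The critical point is that the shifted state makes the cross terms arising from the Gauss-Seidel ordering pair up into the off-diagonal $A$ and $A^T$ blocks, so $M$ is symmetric. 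A Schur-complement calculation on the $(x,u)$ block, combined with positivity of $Q^2 = \tfrac{1}{\alpha\gamma}\mbox{Id} - B^TB$, shows $M \succ 0$ exactly when $\alpha\beta\|A^TA\|_2 < 1$ and $\alpha\gamma\|B^TB\|_2 < 1$, which are precisely the hypotheses. Since $M \succ 0$ and $F$ is maximally monotone, the resolvent $T = (\mbox{Id} + M^{-1}F)^{-1}$ is firmly nonexpansive in the inner product $\braket{\cdot,M\cdot}$, hence $\tfrac{1}{2}$-averaged in the induced Hilbert norm; this yields the first conclusion.

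The residual identity then follows by a direct expansion. Writing $\delta := \nu^{k+1} - \nu^k = (x^{k+2}-x^{k+1},\, z^{k+1}-z^k,\, u^{k+1}-u^k)$ and measuring the residual in the norm induced by $H := \tfrac{1}{\alpha}M$, I expand $\delta^{T} H \delta$ and substitute the identity $Ax^{k+2} + Bz^{k+1} - d = A(x^{k+2}-x^{k+1}) + \tfrac{1}{\alpha}(u^{k+1}-u^k)$ together with the defining relations $P^2 = \tfrac{1}{\alpha\beta}\mbox{Id} - A^TA$ and $Q^2 = \tfrac{1}{\alpha\gamma}\mbox{Id} - B^TB$. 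The two $\pm\|A(x^{k+2}-x^{k+1})\|^2$ terms cancel, and what remains groups cleanly into $\|Ax^{k+2}+Bz^{k+1}-d\|^2 + \|P(x^{k+2}-x^{k+1})\|^2 + \|Q(z^{k+1}-z^k)\|^2$, matching (\ref{eq: S-LADMM-update}).

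The main obstacle is choosing the right shifted state and the right symmetric preconditioner so that two things happen simultaneously: (i) the Gauss-Seidel cross term $-\alpha A^TB(\tilde z - z)$ in the naive $x$-optimality condition is absorbed via substitution, leaving a symmetric $M$; and (ii) the squared $H$-norm of the residual rearranges exactly into the three Euclidean-norm terms of (\ref{eq: S-LADMM-update}). Once the correct $\nu^k$ and $M$ have been identified, averagedness is immediate from the resolvent theory of maximally monotone operators, and the residual identity is a routine but delicate expansion.
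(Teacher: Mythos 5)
Your proof is correct in substance but takes a genuinely different route from the paper's. The paper realizes LiADMM as Douglas--Rachford splitting applied to infimal postcompositions on a lifted problem whose extra constraint blocks are built from $P$ and $Q$; the DRS governing variable is then Euclidean-averaged by construction, and the residual formula drops out of the lifted constraint. You instead recast the iteration as a preconditioned proximal-point step $F(\nu^{k+1})+M(\nu^{k+1}-\nu^k)\ni 0$ on the KKT operator with the shifted state $(x^{k+1},z^k,u^k)$. Your $M$ is right (the three optimality conditions do produce exactly those rows once the $u$-update identity is substituted), the Schur-complement criterion reproduces exactly the two hypotheses, and the expansion of $\delta^{T}H\delta$ with $H=\alpha^{-1}M$ does collapse to the three terms of (\ref{eq: S-LADMM-update}) after the $\pm\|A(x^{k+2}-x^{k+1})\|^2$ cancellation. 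The one step you should make explicit: firm nonexpansiveness of $(\mbox{Id}+M^{-1}F)^{-1}$ gives averagedness in the $M$-norm (equivalently the $H$-norm), whereas the lemma asserts averagedness with respect to the \emph{Euclidean} norm. You need to conjugate by a square root of $H$: the linear map $L(x,z,u)=(Ax+\alpha^{-1}u,\ Px,\ -Qz)$ satisfies $L^{T}L=H$ and is invertible under your positivity argument, so $LTL^{-1}$ is Euclidean-averaged and its Euclidean fixed-point residual is $\|L\delta\|=\|\delta\|_{H}$, which is precisely the right-hand side of (\ref{eq: S-LADMM-update}). This $L$ is exactly the paper's change of variables ($\nu_1=Ax+\alpha^{-1}u$, $\nu_2=Px$, $\nu_3=-Qz$), so with that conjugation spelled out the two arguments are two standard faces of the same fact; yours is shorter and avoids the infimal-postcomposition machinery, while the paper's makes the Euclidean-averaged variable concrete from the outset.
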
 
    \begin{proof}    
        We outline the proof as follows.
        We first derive a trio of updates  that forms the application of an averaged operator for the ADMM problem (\ref{eq: ADMM-problem}) (Step 1).
        Next we rewrite the updates in a more meaningful manner using minimizations with $f$ and $g$ (Step 2). This formulation is then applied to a proximal ADMM problem (a special case of ADMM) that introduces auxiliary variables. This yields an explicit formula for (\ref{eq: S-LADMM-update}) (Step 3).
        The remaining step uses substitution to transform the proximal ADMM formulation into the linearized ADMM updates in (\ref{eq: LADMM-update}) (Step 4). This derivation  draws heavily from \cite{ryu2022large}.\\      
    
        \noindent {\bf Step 1:}
        The classic ADMM method applied to the problem (\ref{eq: ADMM-problem}) is equivalent to applying Douglas Rachford Splitting (DRS) to the problem
        \begin{equation}
            \min_{\nu} \underbrace{(A\vartriangleright f)(\nu)}_{=: \tilde{f}(\nu)} + \underbrace{(B\vartriangleright g)(d-\nu)}_{=:\tilde{g}(\nu)}
            = \min_{\nu} \tilde{f}(\nu) + \tilde{g}(\nu),
        \end{equation}
        where $(A\vartriangleright f)$ is the infimal postcomposition (\eg see \cite{BauschkeCombettes2017_convex})
        \begin{equation}
            (A\vartriangleright f)(\nu) \triangleq   \inf_{x \in \{ \xi : A\xi=\nu\}} f(x).
        \end{equation}
        This yields the iteration
        \begin{equation}
            \nu^{k+1} 
            = \underbrace{\dfrac{1}{2}\left( \mbox{Id} + R_{\alpha \partial \tilde{f}} R_{\alpha \partial \tilde{g}} \right)}_{\triangleq T}(\nu^k)
            = T(\nu^k),
        \end{equation}
        which may be rewritten in parts by
        \begin{equation}
            \begin{aligned}
            \zeta^{k+1/2} & = \prox{\alpha^{-1} \tilde{g}}(\nu^k), \\
            \zeta^{k+1} & = \prox{\alpha^{-1} \tilde{f}} (2\zeta^{k+1/2} - \nu^k), \\
            \nu^{k+1} &= \nu^k + \zeta^{k+1} - \zeta^{k+1/2}. 
            \end{aligned}
            \label{eq: ADMM-DRS-Update-Parts}
        \end{equation}
        This formulation reveals
        \begin{equation}
           \| \nu^k - T(\nu^k)\|
           = \| \zeta^{k+1} - \zeta^{k+1/2}\|.
        \end{equation}
        Below we transform this expression into something meaningful.\\

        \noindent {\bf Step 2:}
        It can be shown  that if the range of $A^T$ intersected with the domain of the dual $f^*$ is nonempty (i.e., $\mathcal{R}(A^t)\cap \mbox{ri dom}(f^\star)\neq\emptyset$), then
        \begin{equation}
            \zeta = \prox{A\vartriangleright f}(\nu)
        \end{equation}
        if and only if
        \begin{equation}
            \zeta = Ax
            \ \ \mbox{and} \ \ 
            x \in \argmin_{\xi} f(\xi) + \dfrac{1}{2}\|A\xi - \nu\|^2.
        \end{equation}
        Also, for a function $B(x) = A(t-x)$ we have
        \begin{equation}
            \prox{\alpha B}(u) =t -  \prox{\alpha A}(d-u).
        \end{equation}        
        These equivalences imply (\ref{eq: ADMM-DRS-Update-Parts}) may be rewritten as
        \begin{equation}
            \begin{aligned}
                z^{k+1} &\in \argmin_{z} g(z) +  \dfrac{\alpha}{2} \|Bz - ( d - \nu^k) \|^2, \\
                 \zeta^{k+1/2} & =  d- Bz^{k+1}, \\
                x^{k+2} & \in \argmin_x f(x) + \dfrac{\alpha}{2} \| Ax - (2\zeta^{k+1/2} - \nu^k)\|^2,\\
                \zeta^{k+1} & = Ax^{k+2}, \\
                \nu^{k+1} & = \nu^k + \zeta^{k+1/2} - \zeta^{k+1}.
            \end{aligned}
        \end{equation} 
        All instances of $\zeta^k$ and $\zeta^{k+1}$ may be removed upon substitution, \ie 
        {\small 
        \begin{equation}
            \begin{aligned}
                z^{k+1} &\in \argmin_{z} g(z) +  \dfrac{\alpha}{2} \| \nu^k + Bz -   d   \|^2, \\ 
                x^{k+2} & \in \argmin_x f(x) + \dfrac{\alpha}{2} \|  \nu^k + Ax + 2(Bz^{k+1}-d) \|^2,\\ 
                \nu^{k+1}  
                & = \nu^k + (Ax^{k+2} + Bz^{k+1} - d).
            \end{aligned}
        \end{equation} }
        
        \noindent {\bf Step 3:}
        First note that our hypothesis implies the inverses of $P$ and $Q$ are defined and the square root can be taken since the matrices are symmetric. In addition, $P$ and $Q$ are positive definite.
        The ADMM problem (\ref{eq: ADMM-problem}) is equivalent to
        \begin{equation}
            \min_{x,z} f(x) + g(z)
        \end{equation}
        subject to the constraint
        \begin{equation}
            \left[\begin{array}{cc} A & 0 \\ P & 0 \\ 0 & \mbox{Id}\end{array} \right]
            \left[ \begin{array}{c} x \\ \tilde{x} \end{array}\right]
            + \left[\begin{array}{cc} B & 0 \\ 0 & \mbox{Id} \\ Q & 0 \end{array} \right]
            \left[ \begin{array}{c} z \\ \tilde{z} \end{array}\right]
            = \left[\begin{array}{c} d \\ 0 \\ 0 \end{array}\right].
        \end{equation}  
        Using the same update formula as in Step 2 yields
        \begin{equation}
            \begin{aligned}
                (z^{k+1},\tilde{z}^{k+1}) &\in \argmin_{(z,\tilde{z})} g(z) +  \dfrac{\alpha}{2} \|\nu_1^k + Bz - d  \|^2  \\
                & + \dfrac{\alpha}{2} \|\nu_2^k + \tilde{z} \|^2 + \dfrac{\alpha}{2} \|\nu_3^k + Qz\|^2, \\ 
                (x^{k+2},\tilde{x}^{k+2}) & \in \argmin_{(x,\tilde{x})} f(x)  \\
                & + \dfrac{\alpha}{2} \| \nu_1^k + Ax +2(Bz^{k+1}-d)\|^2 \\
                & + \dfrac{\alpha}{2}  \|\nu_2^k + Px +2\tilde{z}^{k+1}\|^2 \\
                & + \dfrac{\alpha}{2} \| \nu_3^k + \tilde{x} + 2Qz^{k+1}\|^2 , \\
                \nu_1^{k+1} & = \nu_1^k + (Ax^{k+2} + Bz^{k+1} - d), \\
                \nu_2^{k+1} & = \nu_2^k + ( P\tilde{x}^{k+2} + \tilde{z}^{k+1}  ), \\
                \nu_3^{k+1} & = \nu_3^k + ( \tilde{x}^{k+2} + Q{z}^{k+1} ), \\
            \end{aligned}
        \end{equation} 
        
        where  $\nu^k = (\nu_1^k,\nu_2^k,\nu^k_3)$.
        Simplifying reveals
        \begin{equation}
            \begin{aligned}
                z^{k+1} &\in \argmin_{z} g(z) +  \dfrac{\alpha}{2} \|\nu_1^k + Bz - d \|^2 \\
                & + \dfrac{\alpha}{2}\|\nu_3^k+Qz\|^2, \\
                \tilde{z}^{k+1} & = -\nu_2^k,\\
                 x^{k+2} & \in \argmin_{(x,\tilde{x})} f(x)  + \dfrac{\alpha}{2}  \|\nu_2^k + Px +2\tilde{z}^{k+1}\|^2 \\
                 & + \dfrac{\alpha}{2} \| \nu_1^k + Ax +2(Bz^{k+1}-d)\|^2 \\
                \tilde{x}^{k+2} & = -\nu_3^k-2Qz^{k+1}, \\
                \nu_1^{k+1} & = \nu_1^k + (Ax^{k+2} + Bz^{k+1} - d), \\
                \nu_2^{k+1} & = \nu_2^k + ( P{x}^{k+2} + \tilde{z}^{k+1}  ) = Px^{k+2}, \\
                \nu_3^{k+1} & = \nu_3^k + ( \tilde{x}^{k+2} + Q{z}^{k+1} ) = -Qz^{k+1}. \\
            \end{aligned}
        \end{equation}      
        Simplifying once more gives
        \begin{equation}
            \begin{aligned}
                z^{k+1} &\in \argmin_{z} g(z) +  \dfrac{\alpha}{2} \|\nu_1^k + Bz - d \|^2 \\
                & + \dfrac{\alpha}{2}\|Q(z-z^k)\|^2, \\ 
                 x^{k+2} & \in \argmin_{(x,\tilde{x})} f(x)  + \dfrac{\alpha}{2}  \|P(x-x^{k+1})\|^2 \\
                 & + \dfrac{\alpha}{2} \| \nu_1^k + Ax +2(Bz^{k+1}-d)\|^2 \\ 
                \nu_1^{k+1} & = \nu_1^k + (Ax^{k+2} + Bz^{k+1} - d), \\
                \nu_2^{k+1} & =  Px^{k+2}, \\
                \nu_3^{k+1} & =   -Qz^{k+1}. \\
            \end{aligned}
        \end{equation}         
        Thus,
        \begin{equation}
            \|\nu^{k+1} - \nu^k\|
            = \left\|  \left[ \begin{array}{c} Ax^{k+2} + Bz^{k+1} - d \\ P(x^{k+2}-x^{k+1}) \\ -Q(z^{k+1}-z^k)\end{array} \right]\right\|.
        \end{equation} 
        
        \noindent {\bf Step 4:}
        We now derive the form of the linearized ADMM updates.
        Let $u^k = \alpha (\nu_1^k - Ax^{k+1})$.
        Then
        \begin{equation}
            u^{k+1} = u^k + \alpha (Ax^{k+1} + Bz^{k+1}-d)
            \label{eq: LADMM-uk}
        \end{equation}
        and
        {\small 
        \begin{equation}
            \begin{aligned}
                z^{k+1} & \in \argmin_{z} g(z) + \dfrac{\alpha}{2}\|Q(z-z^k)\|^2 \\
                & + \dfrac{\alpha}{2} \| \alpha^{-1} u^k + Ax^{k+1} + Bz  - d \|^2 \\ 
                & = \argmin_{z} g(z) + \dfrac{\alpha}{2}\braket{z-z^k,Q^2(z-z^k)} + \braket{Bz,u^k} \\
                & + \dfrac{\alpha}{2} \|Ax^{k+1} + Bz-d\|^2  \\
                & = \argmin_{z} g(z) + \braket{Bz,u^k+\alpha (Ax^{k+1}+Bz^k-d)} \\
                & + \dfrac{1}{2\gamma}\|z-z^k\|^2  \\
                & = \prox{\gamma g}\left(z^k - \gamma B^T(u_1^k + \alpha (Ax^{k+1}+Bz^k - d) \right). \\[5 pt]
            \end{aligned}
            \label{eq: LADMM-zk}
        \end{equation}
        }In similar fashion, we deduce
        {\small 
        \begin{equation}
            \begin{aligned}
                x^{k+2} 
                & \in \argmin_x f(x) + \dfrac{\alpha}{2} \|P(x-x^{k+1})\|^2 \\
                & +  \dfrac{\alpha}{2} \| \nu_1^k + Ax +2(Bz^{k+1}-d)\|^2 \\
                & = \argmin_x f(x) + \frac{\alpha}{2} \braket{x-x^{k+1},P^2(x-x^{k+1}} \\
                & + \dfrac{\alpha}{2}\| \alpha^{-1} u_1^k + Ax^{k+1} +Ax +2(Bz^{k+1}-d)\|^2 \\
                & = \argmin_x f(x) +  \dfrac{1}{2\beta}\|x-x^{k+1}\|^2 - \frac{\alpha}{2} \|Ax\|^2 \\
                & +  \braket{Ax,\alpha Ax^{k+1}} + \dfrac{\alpha}{2}\| \alpha^{-1}u^k + Ax + Bz^{k+1}-d\|^2 \\
                & = \prox{\beta f}\left( x^{k+1}-\beta A^T(u^k + \alpha (Ax^{k+}+Bz^{k+1}-d)\right). \\[5 pt]
            \end{aligned}
            \label{eq: LADMM-xk}
        \end{equation}
        }Upon reordering the updates in (\ref{eq: LADMM-uk}), (\ref{eq: LADMM-zk}) and (\ref{eq: LADMM-xk}) to obtain the appropriate dependencies, we obtain (\ref{eq: LADMM-update}), as desired.
    \end{proof}

\end{document}